\theoremstyle{definition}
\newtheorem{theorem}{Theorem}[section]
\newtheorem{definition}[theorem]{Definition}
\newtheorem{proposition}[theorem]{Proposition}
\newtheorem{corollary}[theorem]{Corollary}
\newtheorem{lemma}[theorem]{Lemma}
\newtheorem{remark}[theorem]{Remark}
\def\munderbar#1{\underline{\sbox\tw@{$#1$}\dp\tw@\z@\box\tw@}}
\newcommand{\be}{\begin{equation}}
\newcommand{\ee}{\end{equation}}
\newcommand{\bea}{\begin{equation*}\begin{aligned}}
\newcommand{\eea}{\end{aligned}\end{equation*}}
\newcommand{\R}{\mathbb{R}}
\newcommand{\Max}{\max\limits_}
\newcommand{\Min}{\min\limits_}
\newcommand{\Sup}{\sup\limits_}
\newcommand{\Inf}{\inf\limits_}
\newcommand{\Tr}[1]{\Trace \big( #1 \big)}
\newcommand{\wh}{\hat}
\DeclareMathOperator{\grad}{grad\,}
\newcommand{\mc}{\mathcal}
\newcommand{\mbb}{\mathbb}
\newcommand{\inner}[2]{\big \langle #1, #2 \big \rangle }
\newcommand{\cov}{\Sigma} 
\newcommand{\covsa}{\wh{\cov}}
\newcommand{\Ambi}{\mc P} 
\DeclareMathOperator{\Trace}{Tr}
\newcommand{\PSD}{\mathbb{S}_{+}} 
\newcommand{\PD}{\mathbb{S}_{++}} 
\newcommand{\Let}{\triangleq}
\newcommand{\opt}{^\star}
\newcommand{\m}{\mu}
\newcommand{\msa}{\wh{\mu}}
\newcommand{\p}{\mbb P}
\newcommand{\psa}{\wh{\p}}
\newcommand{\U}{\mc U}
\newcommand{\half}{\frac{1}{2}}
\newcommand{\Proj}{\text{Proj}}
\newcommand{\N}{\mc N}
\newcommand{\radius}{\rho}
\DeclareMathOperator{\FR}{FR}
\DeclareMathOperator{\KL}{KL}
\DeclareMathOperator{\Exp}{Exp}
\newcommand{\B}{\mc B}
\newcommand{\dualvar}{\gamma}
\newcommand{\Z}{Z}
\newcommand{\Chol}{\Lambda}
\newcommand{\eig}{\lambda}
\newcommand{\ie}{{\it i.e.}}
\author{
	Viet Anh Nguyen \qquad Soroosh Shafieezadeh-Abadeh\\
	\'Ecole Polytechnique F\'ed\'erale de Lausanne, Switzerland \\
	\texttt{ \{viet-anh.nguyen, soroosh.shafiee\}@epfl.ch } 
	\AND
	Man-Chung Yue \\
	The Hong Kong Polytechnic University, Hong Kong \\
	\texttt{manchung.yue@polyu.edu.hk} 
	\AND
	Daniel Kuhn \\
	\'Ecole Polytechnique F\'ed\'erale de Lausanne, Switzerland \\
	\texttt{ daniel.kuhn@epfl.ch } 
	\AND
	Wolfram Wiesemann \\
	Imperial College Business School, United Kingdom\\
	\texttt{ww@imperial.ac.uk} 
}
\title{Calculating Optimistic Likelihoods \\ Using (Geodesically) Convex Optimization}
\begin{document}
	
	\maketitle
	
	\begin{abstract}
		A fundamental problem arising in many areas of machine learning is the evaluation of the likelihood of a given observation under different nominal distributions. Frequently, these nominal distributions are themselves estimated from data, which makes them susceptible to estimation errors. We thus propose to replace each nominal distribution with an ambiguity set containing all distributions in its vicinity and to evaluate an \emph{optimistic likelihood}, that is, the maximum of the likelihood over all distributions in the ambiguity set. When the proximity of distributions is quantified by the Fisher-Rao distance or the Kullback-Leibler divergence, the emerging optimistic likelihoods can be computed efficiently using either geodesic or standard convex optimization techniques. We showcase the advantages of working with optimistic likelihoods on a classification problem using synthetic as well as empirical data. 
	\end{abstract}
	
	\section{Introduction}
	
	Assume that a set of i.i.d.~data points $x_1^M \Let x_1, \hdots, x_M \in \R^n$ is generated from one of several Gaussian distributions $\p_c$, $c \in \mathcal{C}$ with $| \mathcal{C} | < \infty$. To determine the distribution $\p_{c^\star}$, $c^\star \in \mathcal{C}$, under which $x_1^M$ has the highest likelihood $\ell(x_1^M, \p_c)$ across all $\p_c$, $c \in \mc C$, we can solve the problem
	\begin{equation}\label{eq:classical_MLE}
	c^\star \in \mathop{\arg \max}_{c \in \mc C} ~\left\{ \ell(x_1^M, \p_c)
	\Let -\frac{1}{M} \sum_{m=1}^M (x_m-\mu_c)^\top \cov^{-1}_c (x_m - \mu_c) - \log \det  \cov_c \right\} ,
	\end{equation}
	where $\mu_c$ and $\Sigma_c$ denote the means and covariance matrices that unambiguously characterize the distributions $\p_c$, $c \in \mc C$, and the log-likelihood function $\ell(x_1^M, \p_c)$ quantifies the (logarithm of the) relative probability of observing $x_1^M$ under the Gaussian distribution $\p_c$. Problem~\eqref{eq:classical_MLE} naturally arises in various machine learning applications. In quadratic discriminant analysis, for example, $x_1^M$ denotes the input values of data samples whose categorical outputs $y_1, \hdots, y_M \in \mathcal{C}$ are to be predicted based on the class-conditional distributions $\mathbb{P}_c$, $c \in \mathcal{C}$ \cite{mclachlan2004discriminant}. Likewise, in Bayesian inference with synthetic likelihoods, a Bayesian belief about the models $\p_c$, $c \in \mathcal{C}$, assumed to be Gaussian for computational tractability, is performed based on an observation $x_1^M$ \cite{price2018bayesian, wood2010statistical}. Problem~\eqref{eq:classical_MLE} also arises in likelihood-ratio tests where the null hypothesis `$x_1^M$ is generated by a distribution $\p_c$, $c \in \mathcal{C}_0$' is compared with the alternative hypothesis `$x_1^M$ is generated by a distribution $\p_c$, $c \in \mathcal{C}_1$' \cite{cox1961tests, cox2013return}.
	
	In practice, the parameters $(\mu_c, \Sigma_c)$ of the candidate distributions $\mathbb{P}_c$, $c \in \mathcal{C}$, are typically not known and need to be estimated from data. In quadratic discriminant analysis, for example, it is common to replace the means $\mu_c$ and covariance matrices $\Sigma_c$ with their empirical counterparts $\hat{\mu}_c$ and $\hat{\Sigma}_c$ that are estimated from data. Similarly, the rival model distributions $\p_c$, $c \in \mathcal{C}$, in Bayesian inference with synthetic likelihoods are Gaussian estimates derived from (costly) sampling processes. Unfortunately, problem~\eqref{eq:classical_MLE} is highly sensitive to misspecification of the candidate distributions $\p_c$. To combat this problem, we propose to replace the likelihood function in~\eqref{eq:classical_MLE} with the \emph{optimistic likelihood}
	\begin{equation}\label{eq:optimistic_MLE}
	\Max{\p \in \Ambi_c} ~ \ell(x_1^M, \p)
	\;\; \text{with} \;\;
	\Ambi_c = \left\{ \mathbb{P} \in \mathcal{M}: \varphi(\psa_c, \p) \leq \radius_c \right\}  ,
	\end{equation}
	where $\mathcal{M}$ is the set of all non-degenerate Gaussian distributions on $\R^n$, $\varphi$ is a dissimilarity measure satisfying $\varphi(\p, \p)=0$ for all $\p \in \mathcal{M}$, and $\rho_c \in \R_+$ are the radii of the ambiguity sets $\mathcal{P}_c$. Problem~\eqref{eq:optimistic_MLE} assumes that the true candidate distributions $\p_c$ are unknown but close to the nominal distributions $\psa_c$ that are estimated from the training data. In contrast to the log-likelihood $\ell(x_1^M, \p_c)$ that is maximized in problem~\eqref{eq:classical_MLE}, the optimistic likelihood~\eqref{eq:optimistic_MLE} is of interest in its own right. A common problem in constrained likelihood estimation, for example, is to determine a Gaussian distribution $\p^\star \sim (\mu^\star, \Sigma^\star)$ that is close to a nominal distribution $\p^0 \sim (\mu^0, \Sigma^0)$ reflecting the available prior information such that $x_1^M$ has high likelihood under $\p^\star$~\cite{schoenberg1997constrained}. This task is an instance of the optimistic likelihood evaluation problem~\eqref{eq:optimistic_MLE} with a suitably chosen dissimilarity measure $\varphi$.

	Of crucial importance in the generalized likelihood problem~\eqref{eq:optimistic_MLE} is the choice of the dissimilarity measure $\varphi$ as it impacts both the statistical properties as well as the computational complexity of the estimation procedure. A natural choice appears to be the Wasserstein distance, which has recently been popularized in the field of optimal transport \cite{takatsu2011wasserstein, villani2008optimal}. 
		The Wasserstein distance on the space of Gaussian distributions is a Riemannian distance, that is, the distance corresponding the curvilinear geometry on the set of Gaussian distributions induced by the Wasserstein distance, as opposed to the usual distance obtained by treating it as a subset of the space of symmetric matrices.
	However, since the Wasserstein manifold has a non-negative sectional curvature~\cite{takatsu2011wasserstein}, calculating the associated optimistic likelihood~\eqref{eq:optimistic_MLE} appears to be computationally intractable. Instead, we study the optimistic likelihood under the Fisher-Rao (FR) distance, which is commonly used in signal and image processing \cite{pennec2006riemannian, arnaudon2013riemannian} as well as computer vision \cite{jayasumana2013kernel, tuzel2008pedestrian}. The FR distance is also a Riemannian metric, and it enjoys many attractive statistical properties that we review in Section~\ref{sect:FR} of this paper. Most importantly, the FR distance has a non-positive sectional curvature, which implies that the optimistic likelihood~\eqref{eq:optimistic_MLE} reduces to the solution of a geodesically convex optimization problem that is amenable to an efficient solution~\cite{bento2017iteration, ref:Sra-2015, tripuraneni2018averaging, zhang2016riemannian, ref:Zhang-2016, zhang2018estimate}. We also study problem~\eqref{eq:optimistic_MLE} under the Kullback–Leibler (KL) divergence (or relative entropy), which is intimately related to the FR metric. While the KL divergence lacks some of the desirable statistical features of the FR metric, we will show that it gives rise to optimistic likelihoods that can be evaluated in quasi-closed form by reduction to a one dimensional problem.
	
	While this paper focuses on the parametric approximation of the likelihood where $\mbb P$ belongs to the family of Gaussian distributions, we emphasize that the optimistic likelihood approach can also be utilized in a \textit{non-}parametric setting~\cite{ref:nguyen2019distributionally}.
	
	The contributions of this paper may be summarized as follows.
	\begin{enumerate}[leftmargin=*]
		\item We show that for Fisher-Rao ambiguity sets, the optimistic likelihood~\eqref{eq:optimistic_MLE} reduces to a geodesically convex problem and is hence amenable to an efficient solution via a Riemannian gradient descent algorithm. We analyze the optimality as well as the convergence of the resulting algorithm.
		\item We show that for Kullback-Leibler ambiguity sets, the optimistic likelihood~\eqref{eq:optimistic_MLE} can be evaluated in quasi-closed form by reduction to a one dimensional convex optimization problem.
		\item We evaluate the numerical performance of our optimistic likelihoods on a classification problem with artificially generated as well as standard benchmark instances.
	\end{enumerate}
	
	Our optimistic likelihoods follow a broader optimization paradigm that exercises optimism in the face of ambiguity. This strategy has been shown to perform well, among others, in multi-armed bandit problems and Bayesian optimization, where the Upper Confidence Bound algorithm takes decisions based on optimistic estimates of the reward \cite{brochu2010tutorial, bubeck2012regret, munos2014bandits, srinivas2010gaussian}. Optimistic optimization has also been successfully applied in support vector machines \cite{bi2005support}, and it closely relates to sparsity inducing non-convex regularization schemes \cite{norton2017optimistic}.
	
	The remainder of the paper proceeds as follows. We study the optimistic likelihood~\eqref{eq:optimistic_MLE} under FR and KL ambiguity sets in Sections~\ref{sect:FR} and~\ref{sect:KL}, respectively. We test our theoretical findings in the context of a classification problem, and we report on numerical experiments in Section~\ref{sect:awesome_numericals}. Supplementary material and all proofs are provided in the online companion.
	
	\textbf{Notation.} Throughout this paper, $\mbb S^n$, $\PSD^n$ and $\PD^n$ denote the spaces of $n$-dimensional symmetric, symmetric positive semi-definite and symmetric positive definite matrices, respectively. For any $A \in \R^{n\times n}$, the trace of $A$ is defined as $\Tr{A} = \sum_{i=1}^n A_{ii}$. For any $A \in \mathbb{S}^n$, $\eig_{\min}(A)$ and $\eig_{\max}(A)$ denote the minimum and maximum eigenvalues of $A$, respectively. The base of $\log(\cdot)$ is $e$.


\section{Optimistic Likelihood Problems under the FR Distance}
\label{sect:FR}

Consider a family of distributions with density functions $p_\theta(x)$, where the parameter $\theta$ ranges over a finite-dimensional smooth manifold $\Theta$. At each point $\theta \in \Theta$, the Fisher information matrix $I_\theta=\mathbb E_x [\nabla_\theta \log(p_\theta(x)) \nabla_\theta \log(p_\theta(x))^\top|\theta]$ defines an inner product $\langle \cdot,\cdot \rangle_\theta$ on the tangent space $T_\theta \Theta$ by $\langle \zeta_1, \zeta_2 \rangle_\theta = \zeta_1^T I_\theta \zeta_2$ for $\zeta_1,\zeta_2 \in T_\theta \Theta$. The family of inner products $\{\langle \cdot, \cdot \rangle_\theta\}_{\theta\in \Theta}$ on the tangent spaces then defines a Riemannian metric, called the FR metric. The FR distance on $\Theta$ is the geodesic distance associated with the FR metric, \ie, the FR distance between the two points $\theta_0, \theta_1\in\Theta$ is  $$ d (\theta_0, \theta_1) = \inf_{\gamma} \int_0^1 \sqrt{\langle \gamma'(t), \gamma'(t) \rangle_{\gamma(t)}}dt,$$
where the infimum is taken over all smooth curves $\gamma:[0,1]\rightarrow \Theta$ with $\gamma(0) = \theta_0$ and $\gamma(1) = \theta_1$.
Any curve $\gamma$ attaining the infimum is said to be a geodesic from $\theta_0$ to $\theta_1$.
The FR metric represents a natural distance measure for parametric families of probability distributions as it is invariant under transformations on the data space (the $x$ space) by a class of statistically important mappings, and it is the unique (up to a scaling) Riemannian metric enjoying such a property, see~\cite{campbell1986extended,cencov2000statistical,bauer2016uniqueness}.

Since the covariance matrix is more difficult to estimate than the mean (see Appendix~\ref{sect:appendix:justification}), we focus here on the family of all Gaussian distributions with a fixed mean vector $\msa\in\mathbb R^n$. These distributions are parameterized by $\theta=\cov$, that is, the covariance matrix. The parameter manifold is thus given by $\Theta=\PD^n$. On this manifold, the FR distance is available in closed form.\footnote{We can also handle the case where the covariance matrix is fixed but the mean is subject to ambiguity, see Appendix~\ref{sect:mean}. However, as there is no closed-form expression for the FR distance between two generic Gaussian distributions, we cannot handle the case where both the mean and the covariance matrix are subject to ambiguity.} 

\begin{proposition}[FR distance for Gaussian distributions \cite{atkinson1981rao}]
	If~$\N(\msa, \cov_0)$ and $ \N(\msa, \cov_1)$ are Gaussian distributions with identical mean $\msa \in \R^n$ and covariance matrices $\cov_0, \cov_1 \in \PD^n$, we have
	\begin{equation} \label{eq:FR:cov}
	d(\cov_0, \cov_1) = \frac{1}{\sqrt{2}} \left\| \log(\cov_1^{-\half} \cov_0 \cov_1^{-\half}) \right\|_F ,
	\end{equation}
	where $\log(\cdot)$ represents the matrix logarithm, and $\| \cdot \|_F$ stands for the Frobenius norm.
\end{proposition}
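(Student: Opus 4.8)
The plan is to derive the Riemannian metric explicitly, recognize it as a rescaled version of the canonical affine-invariant metric on $\PD^n$, and then compute the geodesic distance by exploiting the symmetries of that metric. First I would compute the Fisher information metric for the family $\{\N(\msa, \cov) : \cov \in \PD^n\}$. Writing the log-density $\log p_\cov(x) = \text{const} - \tfrac12 \log\det\cov - \tfrac12 (x-\msa)^\top \cov^{-1}(x-\msa)$ and differentiating twice in $\cov$ along symmetric directions $A, B \in T_\cov\PD^n = \Sym^n$, using $\mathrm{d}\cov^{-1} = -\cov^{-1}(\mathrm{d}\cov)\cov^{-1}$ and $\EE[(x-\msa)(x-\msa)^\top] = \cov$, one obtains $\langle A, B\rangle_\cov = -\EE[\mathrm{d}^2_{A,B}\log p_\cov] = \tfrac12\Tr{\cov^{-1}A\cov^{-1}B}$. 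Thus the FR metric is exactly one half of the affine-invariant metric on the cone $\PD^n$. The key structural fact I would record next is that this metric is invariant under the congruence action $\cov \mapsto G\cov G^\top$ for any $G \in \mathrm{GL}(n)$, which pushes tangent vectors forward by $A \mapsto G A G^\top$: a direct substitution shows $\Tr{(G\cov G^\top)^{-1} (GAG^\top)(G\cov G^\top)^{-1}(GBG^\top)} = \Tr{\cov^{-1}A\cov^{-1}B}$. In particular the metric is invariant under $\cov\mapsto O\cov O^\top$ for orthogonal $O$.

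Second, I would use these isometries to reduce the computation. Applying the congruence with $G = \cov_1^{-1/2}$ gives $d(\cov_0,\cov_1) = d(\cov_1^{-1/2}\cov_0\cov_1^{-1/2}, I)$; then diagonalizing $\cov_1^{-1/2}\cov_0\cov_1^{-1/2} = O\,\diag(\delta_1,\dots,\delta_n)\,O^\top$ and applying the orthogonal isometry gives $d(\cov_0,\cov_1) = d(D, I)$ with $D = \diag(\delta_1,\dots,\delta_n)$. It remains to compute $d(D, I)$. I would show that the curve $\gamma(t) = \diag(\delta_1^t,\dots,\delta_n^t) = \Exp(t\log D)$ is a length-minimizing geodesic from $I$ to $D$: it lies in the set of diagonal matrices, on which the metric pulls back, via the coordinates $u_i = \log a_i$, to the flat Euclidean metric $\tfrac12\sum_i \mathrm{d}u_i^2$; hence this curve has length $\tfrac{1}{\sqrt2}\big(\sum_i (\log\delta_i)^2\big)^{1/2} = \tfrac{1}{\sqrt2}\|\log D\|_F$, and it is globally shortest because $(\PD^n, \langle\cdot,\cdot\rangle)$ is a Cartan–Hadamard manifold — it is complete and, as noted in the paper, has non-positive sectional curvature — so its exponential map at $I$, namely $A\mapsto \Exp(A)$, is a diffeomorphism and radial geodesics are shortest paths.

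Finally, I would assemble the pieces: $d(\cov_0,\cov_1) = d(D,I) = \tfrac{1}{\sqrt2}\|\log D\|_F = \tfrac{1}{\sqrt2}\|O(\log D)O^\top\|_F = \tfrac{1}{\sqrt2}\|\log(\cov_1^{-1/2}\cov_0\cov_1^{-1/2})\|_F$, using $\log(ODO^\top) = O(\log D)O^\top$ and the orthogonal invariance of the Frobenius norm. The main obstacle, and the step I would treat most carefully, is the global minimality claim: one must either invoke Cartan–Hadamard (hence check completeness and non-positive curvature of the affine-invariant metric) or, alternatively, give a direct comparison argument showing that the diagonal submanifold is totally geodesic and that projecting any competing curve onto it does not increase its length. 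Everything else is routine differential-geometric bookkeeping, and the factor $\tfrac{1}{\sqrt2}$ is simply the effect of the $\tfrac12$ in the Fisher metric on geodesic lengths.
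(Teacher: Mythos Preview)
Your proof sketch is correct and follows the standard route for deriving the affine-invariant distance on $\PD^n$: compute the Fisher metric (which indeed coincides with~\eqref{eq:metric} in the paper), exploit congruence invariance to reduce to $d(D,I)$ for diagonal $D$, flatten the diagonal submanifold via logarithmic coordinates, and invoke the Hadamard property for global minimality. Each step is sound, and the paper itself confirms the two ingredients you rely on most heavily---the explicit form of the metric and the fact that $(\PD^n,\langle\cdot,\cdot\rangle_\Sigma)$ is a Hadamard manifold.

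That said, there is nothing to compare against: the paper does not prove this proposition at all. It is stated with a citation to Atkinson and Mitchell~\cite{atkinson1981rao} and used as a black box throughout. So your write-up supplies strictly more than the paper does here. If you want to tighten it, the only place worth a sentence of extra care is the global-minimality step: rather than appealing to Cartan--Hadamard in the abstract, you could note directly that the diagonal matrices form a totally geodesic flat submanifold (since the Levi-Civita connection restricted to commuting directions is Euclidean), which makes the straight-line-in-log-coordinates argument self-contained without needing the curvature sign globally.
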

The distance~$d(\cdot,\cdot)$ is invariant under \emph{inversions} and \emph{congruent transformations} of the input parameters~\cite[Proposition~1]{said2017riemannian}, \ie, for any $\covsa,\cov \in \PD^n$ and invertible matrix $A \in \mathbb{R}^{n \times n}$, we have
\begin{align}
d(\covsa^{-1}, \cov^{-1}) &= d(\covsa, \cov) \quad \label{eq:invar_1}\\
\text{and}\quad d(A \covsa A^\top, A\cov A^\top) &= d(\covsa, \cov).\label{eq:invar_2}
\end{align}
%
By the inversion invariance~\eqref{eq:invar_1}, the distance $d(\cdot, \cdot)$ is independent of whether we use the covariance matrix $\cov$ or the precision matrix $\cov^{-1}$ to parametrize normal distributions. 
Note that if $x_1\sim \mathcal{N}(\mu, \cov_1)$ and $x_2 \sim\mathcal{N}(\mu, \cov_2)$, then $Ax_1 + b \sim \mathcal{N}(A\mu + b, A\cov_1 A^\top)$ and $Ax_2 + b \sim \mathcal{N}(A\mu + b, A\cov_2 A^\top)$. By the congruence invariance~\eqref{eq:invar_2}, the distance $d(\cdot, \cdot)$ thus remains unchanged under affine transformations $x \rightarrow Ax + b$. Remarkably, the invariance property~\eqref{eq:invar_2} uniquely characterizes the distance $d(\cdot, \cdot)$. More precisely, any Riemannian distance satisfying the invariance property~\eqref{eq:invar_2} coincides (up to a scaling) with the distance $d(\cdot, \cdot)$, see, for example, \cite[Section 3]{savage1982space} and \cite[Section 2]{bonnabel2009riemannian}.

We now study the optimistic likelihood problem \eqref{eq:optimistic_MLE}, where the FR distance is used as the dissimilarity measure. Given a data batch $ x_1^M$ and a radius $\rho>0$, the optimistic likelihood problem reduces~to
\begin{equation} \label{eq:FR}
\Min{\cov\in \B^{\FR}} ~ L(\cov),\quad \text{where} \quad \left\{ \begin{array}{l}
L(\cov) \Let \inner{S}{\cov^{-1}} + \log \det \cov, \\ 
\B^{\FR}\Let \{ \Sigma \in \mathbb{S}_{++}^n : d(\cov, \covsa) \le \radius \},\end{array}\right.
\end{equation}
and $S=  M^{-1} \sum_{m=1}^M (x_m - \wh \m)(x_m - \wh \m)^\top$ stands for the sample covariance matrix.


We next prove that problem~\eqref{eq:FR} is solvable, which justifies the use of the minimization operator.
\begin{lemma}\label{lemma:opt_exist}
The optimal value of problem~\eqref{eq:FR} is finite and is attained by some $\cov^\star\in \B^{\FR}$.
\end{lemma}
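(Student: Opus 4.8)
The plan is to show that the objective $L(\cov) = \inner{S}{\cov^{-1}} + \log\det\cov$ attains its minimum over the feasible set $\B^{\FR}$ by a standard Weierstrass argument: I will argue that $\B^{\FR}$ is compact and that $L$ is continuous on it, hence the infimum is finite and attained. The nontrivial point is that $\B^{\FR}$, although closed, is a subset of the \emph{open} cone $\PD^n$, so one must rule out that a minimizing sequence degenerates, i.e., that eigenvalues of $\cov$ drift to $0$ or to $\infty$; equivalently, that $\B^{\FR}$ is bounded away from the boundary of $\PD^n$ and bounded above.

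First I would exploit the closed-form FR distance from Proposition~1.1. The constraint $d(\cov,\covsa)\le\rho$ reads $\tfrac{1}{\sqrt 2}\|\log(\covsa^{-\half}\cov\,\covsa^{-\half})\|_F\le\rho$. Writing $A = \covsa^{-\half}\cov\,\covsa^{-\half}\in\PD^n$, this says $\|\log A\|_F\le\sqrt 2\,\rho$, so if $\eig_1,\dots,\eig_n>0$ are the eigenvalues of $A$ then $\sum_i (\log\eig_i)^2\le 2\rho^2$, hence $|\log\eig_i|\le\sqrt 2\,\rho$ for every $i$, i.e. $e^{-\sqrt 2\rho}\le\eig_i\le e^{\sqrt 2\rho}$. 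Thus $e^{-\sqrt2\rho} I \preceq A \preceq e^{\sqrt2\rho} I$, and multiplying on both sides by $\covsa^{\half}$ gives
\begin{equation*}
e^{-\sqrt 2\rho}\,\covsa \;\preceq\; \cov \;\preceq\; e^{\sqrt 2\rho}\,\covsa
\end{equation*}
for every $\cov\in\B^{\FR}$. Since $\covsa\in\PD^n$ is fixed with $0<\eig_{\min}(\covsa)\le\eig_{\max}(\covsa)<\infty$, this yields uniform bounds $0<c_1 I\preceq\cov\preceq c_2 I$ on $\B^{\FR}$ with $c_1 = e^{-\sqrt2\rho}\eig_{\min}(\covsa)$ and $c_2 = e^{\sqrt2\rho}\eig_{\max}(\covsa)$. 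Hence $\B^{\FR}$ is a bounded subset of $\Sym^n$, it is contained in the compact set $\{\cov\in\PSD^n: c_1 I\preceq\cov\preceq c_2 I\}\subseteq\PD^n$, and it is closed (as the preimage of $[0,\rho]$ under the continuous map $\cov\mapsto d(\cov,\covsa)$, intersected with this compact set); therefore $\B^{\FR}$ is compact and, in particular, nonempty since $\covsa\in\B^{\FR}$.

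It remains to note that $L$ is continuous on $\B^{\FR}$: on the compact set above, matrix inversion $\cov\mapsto\cov^{-1}$ is continuous (eigenvalues stay in $[c_1,c_2]$), so $\cov\mapsto\inner{S}{\cov^{-1}}$ is continuous, and $\cov\mapsto\log\det\cov$ is continuous there as well. By the extreme value theorem, $L$ attains a finite minimum on the nonempty compact set $\B^{\FR}$, which furnishes the desired minimizer $\cov^\star\in\B^{\FR}$. The only real obstacle is the derivation of the two-sided spectral sandwich for $\cov$ from the Frobenius-norm bound on $\log(\covsa^{-\half}\cov\,\covsa^{-\half})$; once the congruence $\cov = \covsa^{\half} A\,\covsa^{\half}$ is in hand, everything else is routine. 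Alternatively, one could invoke the invariance~\eqref{eq:invar_2} of $d(\cdot,\cdot)$ under the congruence $A\mapsto\covsa^{-\half}A\,\covsa^{-\half}$ to reduce directly to the case $\covsa = I$, where $\B^{\FR} = \{\cov : \|\log\cov\|_F\le\sqrt2\rho\}$ is manifestly compact, and then transport the conclusion back.
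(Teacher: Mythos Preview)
Your proof is correct and follows essentially the same strategy as the paper: derive two-sided eigenvalue bounds on $\cov\in\B^{\FR}$ from the Frobenius constraint on $\log(\covsa^{-\half}\cov\,\covsa^{-\half})$, use these to ensure $L$ is continuous on $\B^{\FR}$, and conclude via Weierstrass. The only difference is in how compactness of $\B^{\FR}$ is argued: the paper invokes the Hopf--Rinow theorem (using that $\PD^n$ with the FR metric is a Hadamard manifold, so closed bounded geodesic balls are compact), whereas you obtain compactness more directly from the spectral sandwich $e^{-\sqrt2\rho}\,\covsa\preceq\cov\preceq e^{\sqrt2\rho}\,\covsa$, which exhibits $\B^{\FR}$ as a closed bounded subset of $\Sym^n$. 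Your route is slightly more elementary and self-contained; the paper's route packages the compactness into a reusable lemma about FR balls that is also needed later for the projection and convergence analysis.
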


Even though the objective function of~\eqref{eq:FR} involves a concave log-det term, it can be shown to be convex over the region $0 \prec \cov \preceq 2S$ \citep[Exercise~7.4]{ref:Boyd-2004}. However, in practice $S$ may be singular, in which case this region becomes empty. Maximum likelihood estimation problems akin to~\eqref{eq:FR} are often reparameterized in terms of the precision matrix $X = \cov^{-1}$. In this case,~\eqref{eq:FR} becomes
\begin{equation*}
\min \left\{  \inner{S}{X} - \log \det  X : \, X \in \PD^n, \, \| \log(X^\half \covsa X^\half)\|_F \leq \sqrt{2}\radius \right\}.
\end{equation*}
Even though this reparameterization convexifies the objective, it renders the feasible set non-convex.
	

\subsection{Geodesic Convexity of the Optimistic Likelihood Problem}
As problem~\eqref{eq:FR} cannot be addressed with standard methods from convex optimization, we re-interpret it as a constrained minimization problem on the Riemannian manifold $\Theta=\PD^n$ endowed with the FR metric. The key advantage of this approach is that we can show problem~\eqref{eq:FR} to be \emph{geodesically convex}. Geodesic convexity generalizes the usual notion of convexity in Euclidean spaces to Riemannian manifolds. We can thus solve problem~\eqref{eq:FR} via algorithms from geodesically convex optimization, which inherit many benefits of the standard algorithms of convex optimization in Euclidean spaces. 

The Riemannian manifold $\Theta=\PD^n$ endowed with the FR metric is in fact a Hadamard manifold, that is, a complete simply connected Riemannian manifold with non-positive sectional curvature, see~\cite[Theorem XII 1.2]{lang2012fundamentals}. Thus, any two points are connected by a {\em unique} geodesic~\cite{ref:Bridson-1999}. By \cite[Theorem 6.1.6]{bhatia2009positive}, for $\cov_0,\cov_1 \in \PD^n$, the unique geodesic $\gamma: [0,1] \rightarrow \mathbb{S}^n_{++}$ from $\cov_0$ to $\cov_1$ is given by 
\begin{equation} \label{eq:gamma}
\gamma(t) = \cov_0^\half \big( \cov_0^{-\half} \cov_1 \cov_0^{-\half} \big)^{t} \cov_0^\half.
\end{equation}
We are now ready to give precise definitions of geodesically convex sets and functions on Hadamard manifolds. We emphasize that these definitions would be more subtle for general Riemannian manifolds, which can have several geodesics between two points. 

\begin{definition}[Geodesically convex set]
	A set $\U \subseteq \mathbb{S}^n_{++}$ is said to be geodesically convex if for all $\cov_0, \cov_1 \in \U$, the image of the unique geodesic from $\cov_0$ to $\cov_1$ is contained in $\U$, \ie, $\gamma([0,1]) \subseteq \mathcal{U}$.
\end{definition}
\begin{definition}[Geodesically convex function]\label{def:g-convex}
	A function $f: \PD^n \to \R$ is said to be geodesically convex if for all $\cov_0, \cov_1 \in \mathbb{S}^n_{++}$, the unique geodesic $\gamma$ from $\cov_0$ to $\cov_1$ satisfies
	$f(\gamma(t)) \leq (1-t) f(\cov_0) + t f(\cov_1) \  \forall t \in [0,1].$
\end{definition}

In order to prove that~\eqref{eq:FR} is a geodesically convex optimization problem, we need to establish the geodesic convexity of the feasible region $\mathcal{B}^{\FR}$ and the loss function $L(\cdot)$.
Note that, in stark contrast to Euclidean geometry, a geodesic ball on a general manifold may not be geodesically convex.\footnote{For example, consider the circle $S^1\Let \{x\in \mathbb{R}^2: \|x\|_2 = 1 \}$ which is a 1-dimensional manifold. Any major arc is a geodesic ball but {\em not} a geodesically convex subset of~$S^1$.} 
\begin{theorem}[Geodesic convexity of problem~\eqref{eq:FR}] \label{thm:convex}
	For any $\covsa \in \PD^n$, $S \in \PSD^n$ and $\rho \in \R_+$, $\B^{\FR}$ is a geodesically convex set, and $L(\cdot)$ is a geodesically convex function over $\PD^n$. 
\end{theorem}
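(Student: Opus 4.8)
The plan is to verify the two claims in the theorem separately, working throughout with the explicit description of the unique geodesic in~\eqref{eq:gamma}, and then to combine them.

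For the feasible set, I would exploit the fact --- already established in the excerpt --- that $(\PD^n, \text{FR metric})$ is a Hadamard manifold. On any Hadamard manifold the distance function $\cov \mapsto d(\cov, \covsa)$ to a fixed point is geodesically convex \cite{ref:Bridson-1999}. Consequently, for any $\cov_0, \cov_1 \in \B^{\FR}$ and the unique geodesic $\gamma$ from $\cov_0$ to $\cov_1$, convexity of $d(\cdot,\covsa)$ along $\gamma$ gives $d(\gamma(t), \covsa) \leq (1-t)\, d(\cov_0, \covsa) + t\, d(\cov_1, \covsa) \leq \radius$ for every $t \in [0,1]$; hence $\gamma([0,1]) \subseteq \B^{\FR}$, which is precisely geodesic convexity of the ball.

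For the loss function I would split $L(\cov) = \inner{S}{\cov^{-1}} + \log\det\cov$ and bound each term along~\eqref{eq:gamma}. Setting $D \Let \cov_0^{-\half}\cov_1\cov_0^{-\half} \in \PD^n$, the geodesic reads $\gamma(t) = \cov_0^{\half} D^{t}\cov_0^{\half}$. The log-det term is in fact geodesically affine: multiplicativity of the determinant gives $\det\gamma(t) = \det\cov_0 \cdot (\det D)^{t} = (\det\cov_0)^{1-t}(\det\cov_1)^{t}$, so $t \mapsto \log\det\gamma(t) = (1-t)\log\det\cov_0 + t\log\det\cov_1$. For the term $\inner{S}{\cov^{-1}} = \Trace(S\cov^{-1})$, note that $\gamma(t)^{-1} = \cov_0^{-\half} D^{-t}\cov_0^{-\half}$, whence by the cyclic property of the trace $\Trace(S\gamma(t)^{-1}) = \Trace(\widetilde S\, D^{-t})$ with $\widetilde S \Let \cov_0^{-\half} S \cov_0^{-\half} \in \PSD^n$. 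The key step is that for each eigenvalue $\lambda > 0$ of $D$ the scalar map $s \mapsto \lambda^{s}$ is convex, so $\lambda^{-t} \leq (1-t) + t\lambda^{-1}$ for $t \in [0,1]$; applying this eigenvalue-wise through a spectral decomposition of $D$ yields the positive-semidefinite ordering $D^{-t} \preceq (1-t)I + t\, D^{-1}$. Since $\widetilde S \succeq 0$, multiplying by $\widetilde S$ and taking the trace preserves the inequality, giving $\Trace(\widetilde S\, D^{-t}) \leq (1-t)\Trace(\widetilde S) + t\,\Trace(\widetilde S D^{-1})$, i.e., $\Trace(S\gamma(t)^{-1}) \leq (1-t)\Trace(S\cov_0^{-1}) + t\,\Trace(S\cov_1^{-1})$ after undoing the congruence. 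Adding the two pieces shows $L(\gamma(t)) \leq (1-t)L(\cov_0) + t\, L(\cov_1)$ for all $t \in [0,1]$, which is the desired geodesic convexity of $L$.

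The step I expect to be the crux is the geodesic convexity of $\cov \mapsto \Trace(S\cov^{-1})$: one has to pass from the elementary scalar convexity of $s \mapsto \lambda^{s}$ to the operator inequality $D^{-t} \preceq (1-t)I + t D^{-1}$ and then back to a trace inequality, and it is important that $\widetilde S$ is only positive \emph{semi}definite (which is still enough, since $\Trace(\widetilde S(B-A)) \geq 0$ whenever $\widetilde S \succeq 0$ and $B \succeq A$). The geodesic convexity of $\B^{\FR}$, by contrast, is immediate once one invokes convexity of the distance function on the Hadamard manifold, and the log-det term is a one-line determinant computation.
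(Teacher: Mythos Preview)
Your proposal is correct. For the geodesic convexity of $\B^{\FR}$, your argument and the paper's are essentially the same: both invoke the CAT(0)/Hadamard structure of $(\PD^n,\text{FR})$ via \cite{ref:Bridson-1999}. The paper cites directly the fact that balls in CAT($\kappa$) spaces of radius below $D_\kappa/2$ are geodesically convex (with $D_0=\infty$), while you derive it from the geodesic convexity of the distance function $d(\cdot,\covsa)$ and pass to the sublevel set; the latter is precisely the mechanism behind the former, so there is no substantive difference.

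For the geodesic convexity of $L$, the paper merely points to \cite[Lemma~III.2]{ref:Zhang-2012} and \cite[Corollary~5.3]{ref:Sra-2015} and omits the argument, whereas you supply a self-contained elementary proof: you show $\log\det\gamma(t)$ is geodesically \emph{affine} via $\det\gamma(t)=(\det\cov_0)^{1-t}(\det\cov_1)^t$, and you handle $\Trace(S\,\cdot^{-1})$ by reducing to the operator inequality $D^{-t}\preceq (1-t)I + t\,D^{-1}$ (scalar convexity of $t\mapsto\lambda^{-t}$ applied eigenvalue-wise) and then tracing against $\widetilde S\succeq 0$. This is exactly what the cited references establish, so the difference is one of exposition only: your route is more transparent and requires no external lookups, at the cost of a few extra lines. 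Your identification of the trace term as the crux is apt; the passage from the scalar inequality to the Loewner ordering and the observation that positive \emph{semi}definiteness of $\widetilde S$ suffices are both handled correctly.
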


Theorem~\ref{thm:convex} establishes that the optimistic likelihood problem~\eqref{eq:FR}, which is non-convex with respect to the usual Euclidean geometry on the embedding space $\mathbb R^{n\times n}$, is actually convex with respect to the Riemannian geometry on $\PD^n$ induced by the FR metric.


\newpage
\subsection{Projected Geodesic Gradient Descent Algorithm}
\begin{wrapfigure}{L}{0.47\textwidth}
	\centering
		\includegraphics[width=0.44\textwidth]{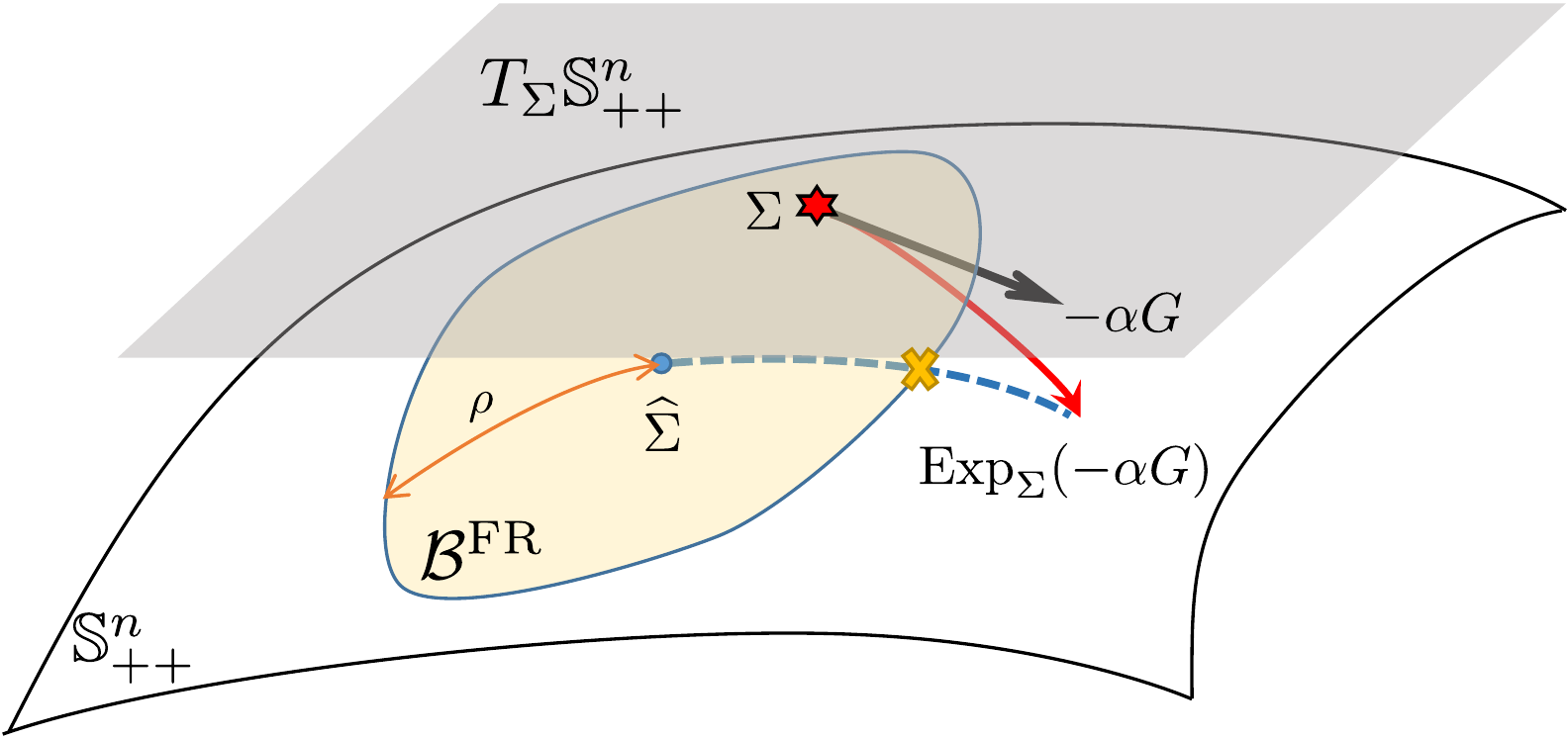}
	\caption{Visualization of the FR ball $\B^{\FR}$ (yellow set) within the manifold $\PD^n$ (white set).}
	\label{fig:FR_ball}
\end{wrapfigure}
In the same way as the convexity of a standard constrained optimization problem can be exploited to find a global minimizer via a projected gradient descent algorithm, the geodesic convexity of problem~\eqref{eq:FR} can be exploited to find a global minimizer by using a \emph{projected geodesic gradient descent} algorithm of the type described in~\cite{ref:Zhang-2016}. The mechanics of a generic iteration are visualized in Figure~\ref{fig:FR_ball}. As in any gradient descent method, given the current iterate $\cov$, we first need to compute the direction along which the objective function $L$ decreases fastest. In the context of optimization on manifolds, this direction corresponds to the negative Riemannian gradient $-G$ at point~ $\cov$, which belongs to the tangent space $T_{\cov}\PD^n\simeq\mathbb S^n$. 
Unfortunately, the curve $\gamma(\alpha)=\cov - \alpha G$ fails to be a geodesic and will eventually leave the manifold for sufficiently large step sizes $\alpha$. This prompts us to construct the (unique) geodesic that emanates from point $\cov$ with initial velocity $-G$. Formally, this geodesic can be represented as $\gamma(\alpha)=\Exp_{\cov}(-\alpha G)$, where $\Exp_{\cov}(\cdot)$ denotes the \emph{exponential map} at~$\cov$. 
As we will see below, this geodesic (visualized by the red curve) remains within the manifold for any $\alpha>0$ but may eventually leave the feasible region $\B^{\FR}$. If this happens for the chosen step size $\alpha$, we project $\Exp_{\cov}(-\alpha G)$ back onto the feasible region, that is,  we map it to its closest point in $\B^{\FR}$ with respect to the FR distance (visualized by the yellow cross). Denoting this FR projection by $\Proj_{\B^{\FR}} (\cdot)$, the next iterate of the projected geodesic gradient descent algorithm can thus be expressed as $\cov^+=\Proj_{\B^{\FR}} (\Exp_{\cov} (-\alpha G))$. 

Starting from $\cov_1=\covsa$, the proposed algorithm constructs $K$ iterates $\{\cov_k\}_{k=1}^K$ via the above recursion. As in~\cite{ref:Zhang-2016}, the algorithm also constructs a second sequence $\{\bar{\cov}_k\}_{k=1}^K$ of feasible covariance matrices with $\bar\cov_1=\covsa$ and $\bar\cov_{k+1}=\bar\gamma(1/(k+1))$ for $k=1,\ldots, K-1$, where $\bar\gamma(t)$ represents the geodesic~\eqref{eq:gamma} connecting $\bar\cov_k$ with $\cov_{k+1}$. Thus, $\bar{\cov}_{k+1}$ is defined as a \emph{geodesic convex combination} of $\bar{\cov}_k$ and $\cov_{k+1}$. 
A precise description of the proposed algorithm in pseudocode is provided in Algorithm~\ref{alg:projected:geodesic}.


In the following we show that the Riemannian gradient, the exponential map $\Exp_{\cov}(\cdot)$ as well as the projection $\Proj_{\B^{\FR}} (\cdot)$ can all be evaluated in closed form in $\mc O(n^3)$.


\begin{algorithm}[t]
	\caption{Projected Geodesic Gradient Descent Algorithm}
	\label{alg:projected:geodesic}
	\begin{algorithmic}
		\STATE {\bfseries Input:} $\covsa \succ 0$, $\rho > 0$, $S \succeq 0$, $K\in\mathbb N$, $\{\alpha_k\}_{k=1}^K\subseteq \mathbb R_{++}$
		\STATE {\bfseries Initialization:} Set $\cov_1 \leftarrow \covsa$, $\bar{\cov}_1 \leftarrow \covsa$
		\FOR{$k=1, 2,  \ldots, K-1$}
		\STATE Compute the Riemannian gradient at $\cov_k$: $ G_k \leftarrow   2(\cov_k -  S )$
		\STATE Perform a gradient descent step using the exponential map:
		\begin{align*}
			\textstyle \cov_{k+\half} \leftarrow \Exp_{\cov_k} (-\alpha_k G_k) = \cov_k^\half \exp\big(-\alpha_k \cov_k^{-\half} G_k \cov_k^{-\half} \big) \cov_k^\half
		\end{align*}
		\STATE Project $\cov_{k+\half}$ onto $\B^{\FR}$:
		$ \cov_{k+1} \leftarrow \Proj_{\B^{\FR}}(\cov_{k+\half})$
		\STATE Compute the new iterate by interpolation: $\bar{\cov}_{k+1}  \leftarrow \textstyle \Exp_{\bar{\cov}_k} \big( \frac{1}{k+1} \Exp_{\bar{\cov}_k}^{-1} \left( \cov_{k+1} \right) \big)  
		$
		\ENDFOR 
		\STATE{\bfseries Output:} Report the last iterate $\bar{\cov}_K$ as an approximate solution
	\end{algorithmic}
\end{algorithm}

By \cite[Page 362]{atkinson1981rao}, the FR metric on the tangent space $T_{\cov}\PD^n$ at $\cov \in \mathbb{S}_{++}^n$ can be re-expressed as
\begin{equation}\label{eq:metric}
\inner{\Omega_1}{\Omega_2}_\Sigma \Let \frac{1}{2}\Tr{\Omega_1 \Sigma^{-1} \Omega_2 \Sigma^{-1}} \quad \forall \Omega_1, \Omega_2 \in T_\Sigma \PD^n.
\end{equation}
Using \eqref{eq:metric} and \cite[Equation 3.32]{AbsMahSep2008}, the Riemannian gradient $G=\text{grad}\,L$ of the objective function $L(\cdot )$ at $\cov$ can be computed from the Euclidean gradient $\nabla L(\cov)$ as
\begin{equation}\label{eq:grad}
	\grad L (\cov)  = 2 \cov ( \nabla L(\cov) ) \cov = 2\cov(\cov^{-1} -\cov^{-1} S \cov^{-1} )\cov =2( \cov - S).
\end{equation}
Moreover, by~\cite[Equation (3.2)]{ref:Sra-2015}, the exponential map $\Exp_{\cov}: T_\cov\mathbb{S}_{++}^n \to \mathbb{S}_{++}^n$ at $\cov$ is given by
\begin{equation*}
	\Exp_{\cov}(G) = \cov^\half \exp( \cov^{-\half} G \cov^{-\half} )\cov^\half, \quad G\in  T_\cov\mathbb{S}_{++}^n \simeq \mathbb{S}^n,
\end{equation*}
where $\exp(\cdot)$ denotes the matrix exponential.
The inverse map $\Exp_\cov^{-1}: \mathbb{S}_{++}^n \to T_\cov \mathbb{S}_{++}^n$ satisfies
$$
	\Exp_\cov^{-1} (A) = \cov^\half \big( \log \cov^{-\half} A \cov^{-\half} \big) \cov^\half, \quad A \in \mathbb{S}_{++}^n.
$$
Finally, the projection $\Proj_{\B}(\cdot)$ onto $\mathcal{B}^{\FR}$ with respect to the FR distance is defined through
\begin{equation}\label{eq:proj0}
\Proj_{\B^{\FR}}(\cov') \Let \mathop{\arg\min}_{\cov \in \B^{\FR}} \; d(\cov, \cov'), \quad \cov' \in \mathbb{S}_{++}^n.
\end{equation}
The following lemma ensures that this projection is well-defined and admits a closed-form expression.
\begin{lemma}[Projection onto $\mathcal{B}^{\FR}$]
	\label{lemma:projection}
	For any $\cov' \in \PD^n$ with $d(\covsa, \cov') = \rho'$ the following hold.
	\begin{enumerate} [wide, labelwidth=!, labelindent=0pt, label=(\roman*)]
		\item\label{item:1} There $\arg\min$-mapping in~\eqref{eq:proj0} is a singleton, and thus $\Proj_{\B^{\FR}}(\cov')$ is well-defined. 
		\item\label{item:2} The projection of $\cov'$ onto $\B^{\FR}$ is given by
		\begin{equation}
		\label{eq:proj}
		\Proj_{\B^{\FR}}(\cov') = \begin{cases}
   \covsa^\half \big( \covsa^{-\half} \cov' \covsa^{-\half} \big)^{\frac{\radius}{\rho'}} \covsa^\half & \mbox{if } \rho' > \radius, \\
   \cov'& \mbox{otherwise.} 
		\end{cases}
		\end{equation}
	\end{enumerate}
\end{lemma}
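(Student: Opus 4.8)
The plan is to use throughout that $(\PD^n,d)$ is a Hadamard manifold (complete, simply connected, non-positively curved) and that $\B^{\FR}$ is a nonempty, closed, geodesically convex subset of it: nonemptiness holds since $\covsa\in\B^{\FR}$, closedness because $d$ is continuous, and geodesic convexity is exactly Theorem~\ref{thm:convex}.

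\emph{Part (i).} For existence, note that completeness of $(\PD^n,d)$ and the Hopf--Rinow theorem imply that the bounded closed set $\B^{\FR}$ is compact, so the continuous map $\cov\mapsto d(\cov,\cov')$ attains its infimum on $\B^{\FR}$ and the $\arg\min$ in \eqref{eq:proj0} is nonempty. For uniqueness, if $\cov'\in\B^{\FR}$ the minimal distance is $0$ and is attained only at $\cov'$; otherwise let $r>0$ denote the minimal distance and suppose $\cov_1^\star\neq\cov_2^\star$ both attain it. Let $\bar\gamma$ be the unique geodesic joining them and $\bar\cov=\bar\gamma(1/2)$ its midpoint, so $\bar\cov\in\B^{\FR}$ by geodesic convexity. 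The semiparallelogram (CAT$(0)$) inequality, valid on any Hadamard manifold, gives
\begin{equation*}
d(\bar\cov,\cov')^2 \;\le\; \tfrac12 d(\cov_1^\star,\cov')^2 + \tfrac12 d(\cov_2^\star,\cov')^2 - \tfrac14 d(\cov_1^\star,\cov_2^\star)^2 \;=\; r^2 - \tfrac14 d(\cov_1^\star,\cov_2^\star)^2 \;<\; r^2,
\end{equation*}
contradicting minimality of $r$. Hence the projection is unique, and $\Proj_{\B^{\FR}}(\cov')$ is well defined.

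\emph{Part (ii).} If $\rho'\le\radius$ then $\cov'\in\B^{\FR}$, so by part (i) the unique minimizer is $\cov'$ itself. If $\rho'>\radius$, let $\gamma$ be the unique geodesic \eqref{eq:gamma} from $\covsa$ to $\cov'$, i.e.\ $\gamma(t)=\covsa^{\half}\big(\covsa^{-\half}\cov'\covsa^{-\half}\big)^{t}\covsa^{\half}$, and set $\cov^\star\Let\gamma(\radius/\rho')$, which is precisely the right-hand side of \eqref{eq:proj}. Since $\gamma$ is a constant-speed minimizing geodesic we have $d(\covsa,\gamma(t))=t\rho'$ and $d(\gamma(s),\gamma(t))=|t-s|\rho'$ for $s,t\in[0,1]$ (this also follows directly from \eqref{eq:FR:cov} together with the congruence invariance \eqref{eq:invar_2} applied with $A=\covsa^{-\half}$). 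Consequently $d(\covsa,\cov^\star)=\radius$, so $\cov^\star\in\B^{\FR}$, and $d(\cov^\star,\cov')=(1-\radius/\rho')\rho'=\rho'-\radius$. On the other hand, for every $\cov\in\B^{\FR}$ the triangle inequality yields $\rho'=d(\covsa,\cov')\le d(\covsa,\cov)+d(\cov,\cov')\le\radius+d(\cov,\cov')$, i.e.\ $d(\cov,\cov')\ge\rho'-\radius$. Thus $\cov^\star$ attains the minimal distance $\rho'-\radius$, and by the uniqueness established in part (i) it is the projection, proving \eqref{eq:proj}.

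\emph{Main obstacle.} The only genuinely delicate point is the uniqueness of the minimizer; this is where the Hadamard (non-positive curvature) structure is indispensable, through the semiparallelogram inequality, and where the argument would break down on a positively curved manifold such as the Wasserstein manifold. Once uniqueness is in hand, identifying the projection in closed form is a one-line triangle-inequality computation along the geodesic \eqref{eq:gamma}.
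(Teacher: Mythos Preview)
Your proof is correct and follows essentially the same route as the paper's: compactness of $\B^{\FR}$ via Hopf--Rinow for existence, the CAT$(0)$ structure for uniqueness, and the triangle-inequality-plus-constant-speed-geodesic argument for the explicit formula in part~(ii). The only cosmetic difference is that for uniqueness the paper directly invokes \cite[Proposition~II.2.4]{ref:Bridson-1999} (projection onto a complete convex subset of a CAT$(0)$ space is single-valued), whereas you spell out the underlying semiparallelogram inequality; your version is slightly more self-contained but mathematically the same.
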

By comparison with~\eqref{eq:gamma}, one easily verifies that $\Proj_{\B^{\FR}}(\cov')$ constitutes a geodesic convex combination between $\cov'$ and $\covsa$. Figure~\ref{fig:FR_ball} visualizes the geodesic from $\covsa$ to $\cov'$ by the blue dashed line. Therefore, the projection $\Proj_{\B^{\FR}}$ onto the FR ball $\B^{\FR}$ within $\PD^n$ endowed with the FR metric is constructed in a similar manner as the projection onto a Euclidean ball within a Euclidean space.

The following theorem asserts that Algorithm~\ref{alg:projected:geodesic} enjoys a sublinear convergence rate.
\begin{theorem}[Sublinear convergence rate]
	\label{thm:convergence}
	With a constant stepsize
	$$
		\alpha_k \equiv  2^{1/4} \sqrt{\rho \tanh(2\sqrt{2}\rho)}/(\Gamma \sqrt{K}),
	$$
	where $\Gamma \Let 2^{-1/2}\sqrt{n}\cdot e^{2\sqrt{2}\rho} \cdot\eig_{\min}^{-2}(\covsa) \cdot \max\{ |1 - e^{\sqrt{2}\rho} \eig^{-1}_{\min}(\hat{\Sigma}) \eig_{\max} (S)| , 1 \}$,
Algorithm~\ref{alg:projected:geodesic} satisfies
	\[
		L(\bar{\cov}_K) - L(\cov\opt) \leq  \frac{2^{\frac{7}{4}} \rho^{\frac{3}{2} }\Gamma}{\sqrt{K \tanh(2\sqrt{2}\rho)}}.
	\]
\end{theorem}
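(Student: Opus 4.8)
The plan is to recognize Algorithm~\ref{alg:projected:geodesic} as an instance of the projected geodesic subgradient method of~\cite{ref:Zhang-2016}, run on the Hadamard manifold $\PD^n$ equipped with the FR metric~\eqref{eq:metric}, and then to specialize that method's $\mathcal{O}(1/\sqrt{K})$ guarantee after supplying the three problem-dependent constants it requires. All the structural ingredients are already in place: by Theorem~\ref{thm:convex} the feasible set $\B^{\FR}$ and the objective $L$ are geodesically convex; $\PD^n$ with the FR metric is a Hadamard manifold whose sectional curvature is bounded below (it is homogeneous, hence its curvature attains its bounds); the Riemannian gradient $\grad L(\cov)=2(\cov-S)$ and the exponential map together with its inverse have the closed forms given around~\eqref{eq:grad}; Lemma~\ref{lemma:projection} shows $\Proj_{\B^{\FR}}$ is single valued, and on a Hadamard manifold the metric projection onto a geodesically convex set is nonexpansive~\cite{ref:Bridson-1999}; the recursion $\bar\cov_{k+1}=\Exp_{\bar\cov_k}(\frac{1}{k+1}\Exp_{\bar\cov_k}^{-1}(\cov_{k+1}))$ is exactly the weighted geodesic averaging of~\cite{ref:Zhang-2016}; and Lemma~\ref{lemma:opt_exist} makes $L(\cov\opt)$ well defined. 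Note also that $\cov_1=\covsa\in\B^{\FR}$ and $\cov_{k+1}=\Proj_{\B^{\FR}}(\cdot)\in\B^{\FR}$, so all iterates stay feasible.

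The core of the proof is to pin down the three constants over the feasible ball. \emph{Diameter:} since $\B^{\FR}$ is the closed FR-ball of radius $\rho$ around $\covsa$ and, by Theorem~\ref{thm:convex}, geodesically convex, its geodesic diameter is $2\rho$, so $d(\cov_k,\cov\opt)\le 2\rho$ for all $k$. \emph{Curvature:} the trigonometric comparison inequality for Hadamard manifolds introduces a factor $\zeta$ depending on the sectional-curvature lower bound and on $d(\cov_k,\cov\opt)\le 2\rho$; for the FR metric's curvature bound this factor can be taken to be $\zeta=2\sqrt{2}\,\rho/\tanh(2\sqrt{2}\,\rho)$. \emph{Gradient bound:} every $\cov\in\B^{\FR}$ obeys the spectral sandwich $e^{-\sqrt{2}\rho}\covsa\preceq\cov\preceq e^{\sqrt{2}\rho}\covsa$, because $d(\cov,\covsa)\le\rho$ means $\|\log(\covsa^{-1/2}\cov\covsa^{-1/2})\|_F\le\sqrt{2}\rho$ by~\eqref{eq:FR:cov}, so every eigenvalue of $\covsa^{-1/2}\cov\covsa^{-1/2}$ lies in $[e^{-\sqrt{2}\rho},e^{\sqrt{2}\rho}]$. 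Feeding this sandwich into $\grad L(\cov)=2(\cov-S)$ and the metric~\eqref{eq:metric}, i.e.\ estimating $\|\grad L(\cov)\|_\cov=\sqrt{2}\,\|I-\cov^{-1/2}S\cov^{-1/2}\|_F$ via two-sided eigenvalue bounds on $I-\cov^{-1/2}S\cov^{-1/2}$ and the conversion between the ambient Frobenius norm and the FR norm on $T_\cov\PD^n$, yields exactly the constant $\Gamma$ of the statement, which also serves as a Lipschitz modulus of $L$ on $\B^{\FR}$ with respect to the FR distance.

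With these constants fixed, the usual one-step inequality of projected subgradient descent on a Hadamard manifold gives, for each $k$,
\[
d(\cov_{k+1},\cov\opt)^2 \;\le\; d(\cov_k,\cov\opt)^2 \;-\; 2\alpha_k\big(L(\cov_k)-L(\cov\opt)\big) \;+\; \zeta\,\alpha_k^2\,\|G_k\|_{\cov_k}^2,
\]
where nonexpansiveness of $\Proj_{\B^{\FR}}$ (and $\cov\opt\in\B^{\FR}$) is used to pass from $\cov_{k+\half}$ to $\cov_{k+1}$, and the cross term is controlled by the geodesic-convexity inequality $\langle G_k,\Exp_{\cov_k}^{-1}(\cov\opt)\rangle_{\cov_k}\le L(\cov\opt)-L(\cov_k)$. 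Telescoping over $k$, bounding $\|G_k\|_{\cov_k}\le\Gamma$, $d(\cov_1,\cov\opt)\le 2\rho$ and $\zeta$ as above, and invoking $L(\bar\cov_K)\le\frac{1}{K}\sum_{k=1}^{K}L(\cov_k)$ (iterated geodesic convexity of $L$ along the averaging recursion) yields, for a constant stepsize $\alpha_k\equiv\alpha$,
\[
L(\bar\cov_K)-L(\cov\opt) \;\le\; \frac{(2\rho)^2}{2\alpha K} \;+\; \frac{\zeta\,\alpha\,\Gamma^2}{2}.
\]
Equating the two terms forces $\alpha=2\rho/(\Gamma\sqrt{\zeta K})=2^{1/4}\sqrt{\rho\tanh(2\sqrt{2}\rho)}/(\Gamma\sqrt{K})$, which is exactly the prescribed stepsize; each term then equals $2^{3/4}\rho^{3/2}\Gamma/\sqrt{K\tanh(2\sqrt{2}\rho)}$, so their sum is the claimed bound.

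I expect the gradient bound to be the main obstacle: turning the qualitative statement ``$\grad L$ is bounded on $\B^{\FR}$'' into the precise expression for $\Gamma$ requires the spectral sandwich together with careful two-sided eigenvalue bounds for $I-\cov^{-1/2}S\cov^{-1/2}$ and the Frobenius-to-FR norm comparison, and it is exactly where the stated constant is likely somewhat conservative. A secondary point is fixing the sectional-curvature constant of the FR metric so that $\zeta$ assumes the stated form (equivalently, that $2\sqrt{2}\rho$ is the correct argument of $\tanh$), and, relatedly, checking that the possibly infeasible intermediate iterates $\cov_{k+\half}$ remain in $\PD^n$ so that the Hadamard geometry can be applied throughout the telescoping argument.
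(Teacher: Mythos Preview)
Your proposal is correct and follows essentially the same route as the paper's proof: both specialize the projected geodesic subgradient analysis of~\cite{ref:Zhang-2016} by fixing the diameter $2\rho$, the sectional-curvature lower bound $-2$ (giving the $\tanh(2\sqrt{2}\rho)$ factor), and a uniform bound $\Gamma$ on $\|\grad L(\cov)\|_\cov$ over $\B^{\FR}$ obtained from the spectral sandwich, then telescope the one-step inequality and pass to $\bar\cov_K$ via geodesic convexity along the averaging recursion (the paper does this last step by induction). Your remark that the stated $\Gamma$ is ``likely somewhat conservative'' is well founded.
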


The proof of Theorem~\ref{thm:convergence} closely follows that of \cite[Theorem 9]{ref:Zhang-2016}. The difference is that~\cite[Theorem 9]{ref:Zhang-2016} requires the objective function to be Lipschitz continuous on $\PD^n$. Unfortunately, such an assumption is not satisfied by $L(\cdot)$. We circumvent this by proving that the Riemannian gradient of~$L(\cdot)$ is bounded uniformly on $\B^{\FR}$. 

Endeavors are currently underway to devise algorithms for minimizing a geodesically strongly convex objective function over a geodesically convex feasible set that offer a linear convergence guarantee, see, e.g., \cite[Theorem 15]{ref:Zhang-2016}. The next lemma shows that the objective function of problem~\eqref{eq:FR} is indeed geodesically smooth and geodesically strongly convex\footnote{The strong convexity and smoothness properties are defined in Definitions~\ref{def:strong-convexity} and~\ref{def:smoothness}, respectively.} whenever $S \succ 0$. This suggests that the empirical performance of Algorithm~\ref{alg:projected:geodesic} could be significantly better than the theoretical guarantee of Theorem~\ref{thm:convergence}. Indeed, our numerical results in Section~\ref{sect:awesome_numericals:the_convergence} confirm that if $S \succ 0$, then Algorithm~\ref{alg:projected:geodesic} displays a linear convergence rate. 

\begin{lemma}[Strong convexity and smoothness of $L(\cdot)$] \label{lemma:obj_function}
	The objective function $L(\cdot)$ of problem~\eqref{eq:FR} is geodesically $\beta$-smooth on $\B^{\FR}$ with $$ \beta = \frac{2 \lambda_{\max}(S)}{\lambda_{\min}(\widehat{\Sigma}) \exp(-\sqrt{2}\rho)} .$$ If $S \succ 0$, then $L(\cdot)$ is also geodesically $\sigma$-strongly convex on $\B^{\FR}$ with $$ \sigma = \frac{2 \lambda_{\min}(S)}{\lambda_{\max}(\widehat{\Sigma}) \exp(\sqrt{2}\rho)} . $$
	\end{lemma}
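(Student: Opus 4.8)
The plan is to reduce both assertions to a two-sided bound on the Riemannian Hessian of $L$ over the geodesically convex set $\B^{\FR}$. By Definitions~\ref{def:strong-convexity} and~\ref{def:smoothness}, geodesic $\sigma$-strong convexity and $\beta$-smoothness of $L$ on $\B^{\FR}$ amount to $\sigma\, d(\cov_0,\cov_1)^2\le \frac{\dd^2}{\dd t^2}L(\gamma(t))\le \beta\, d(\cov_0,\cov_1)^2$ for every geodesic $\gamma$ joining two points $\cov_0,\cov_1\in\B^{\FR}$ and every $t\in[0,1]$. Since $\B^{\FR}$ is geodesically convex by Theorem~\ref{thm:convex}, such a $\gamma$ stays inside $\B^{\FR}$, and since a geodesic has constant speed $\|\gamma'(t)\|_{\gamma(t)}=d(\cov_0,\cov_1)$, it suffices to exhibit $\sigma,\beta$ with $\sigma\inner{\Omega}{\Omega}_\cov\le \inner{\mathrm{Hess}\,L(\cov)[\Omega]}{\Omega}_\cov\le\beta\inner{\Omega}{\Omega}_\cov$ for all $\cov\in\B^{\FR}$ and all $\Omega\in T_\cov\PD^n\simeq\Sym^n$. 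To evaluate this Hessian I would use the geodesic $\gamma(t)=\Exp_\cov(t\Omega)=\cov^\half\exp(tV)\cov^\half$ with $V\Let\cov^{-\half}\Omega\cov^{-\half}$, which has $\gamma(0)=\cov$ and $\gamma'(0)=\Omega$. Then $\log\det\gamma(t)=\log\det\cov+t\,\Tr{V}$ is affine in $t$, so the $\log\det$ term of $L$ has vanishing Hessian; and since $\gamma(t)^{-1}=\cov^{-\half}\exp(-tV)\cov^{-\half}$ and $V$ commutes with $\exp(-tV)$, differentiating $\inner{S}{\gamma(t)^{-1}}=\Tr{\tilde S\exp(-tV)}$ twice at $t=0$ gives $\inner{\mathrm{Hess}\,L(\cov)[\Omega]}{\Omega}_\cov=\Tr{\tilde S V^2}$, where $\tilde S\Let\cov^{-\half}S\cov^{-\half}$, while~\eqref{eq:metric} gives $\inner{\Omega}{\Omega}_\cov=\half\|V\|_F^2$.

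Next I would bound $\Tr{\tilde S V^2}$. Since $\tilde S\succeq 0$ (and $\tilde S\succ 0$ when $S\succ 0$) and $V^2\succeq 0$, we have $\lambda_{\min}(\tilde S)\|V\|_F^2\le\Tr{\tilde S V^2}\le\lambda_{\max}(\tilde S)\|V\|_F^2$, so everything comes down to estimating the extreme eigenvalues of $\tilde S=\cov^{-\half}S\cov^{-\half}$ uniformly over $\cov\in\B^{\FR}$. For this I would invoke the closed form~\eqref{eq:FR:cov}: the constraint $d(\cov,\covsa)\le\rho$ reads $\|\log(\covsa^{-\half}\cov\covsa^{-\half})\|_F\le\sqrt2\rho$, which confines every eigenvalue of $\covsa^{-\half}\cov\covsa^{-\half}$ to the interval $[e^{-\sqrt2\rho},e^{\sqrt2\rho}]$ and hence yields $e^{-\sqrt2\rho}\covsa\preceq\cov\preceq e^{\sqrt2\rho}\covsa$. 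Consequently $\lambda_{\min}(\cov)\ge e^{-\sqrt2\rho}\lambda_{\min}(\covsa)$ and $\lambda_{\max}(\cov)\le e^{\sqrt2\rho}\lambda_{\max}(\covsa)$, and combining these with $\lambda_{\max}(\tilde S)\le\lambda_{\max}(S)/\lambda_{\min}(\cov)$ and $\lambda_{\min}(\tilde S)\ge\lambda_{\min}(S)/\lambda_{\max}(\cov)$ produces exactly the stated values of $\beta$ and $\sigma$; in particular $\sigma>0$ precisely when $\lambda_{\min}(S)>0$, \ie, when $S\succ 0$. Substituting these back into the Hessian sandwich and unwinding the first-paragraph reduction then finishes the proof.

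I expect the differentiation of $\inner{S}{\cov^{-1}}$ along the geodesic and the eigenvalue arithmetic to be routine. The step that needs genuine care is the initial reduction: it relies on the standard fact that, on a Hadamard manifold, geodesic $\sigma$-strong convexity / $\beta$-smoothness of a $C^2$ function on a geodesically convex domain is equivalent to the two-sided Hessian bound over that domain, and on Theorem~\ref{thm:convex} to guarantee that the geodesic between two feasible matrices never leaves $\B^{\FR}$, so that the pointwise Hessian bounds really do apply along it. I would also note that a more naive route --- expressing $\frac{\dd^2}{\dd t^2}L(\gamma(t))$ directly through the endpoints via~\eqref{eq:gamma} --- would control the eigenvalues of $\cov_0^{-\half}\cov_1\cov_0^{-\half}$ only via $d(\cov_0,\cov_1)\le 2\rho$ and would therefore yield the weaker exponents $e^{\pm 2\sqrt2\rho}$; evaluating the Hessian separately at each feasible point is what delivers the sharp $e^{\pm\sqrt2\rho}$.
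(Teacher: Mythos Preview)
Your proposal is correct and follows essentially the same route as the paper: both reduce the claim to a two-sided bound on the Riemannian Hessian of $L$ over the geodesically convex set $\B^{\FR}$, obtain the identical quadratic form $\Tr{\cov^{-1}S\cov^{-1}\Omega\cov^{-1}\Omega}$ (equivalently your $\Tr{\tilde S V^2}$), and then combine the estimate $\lambda_{\min}(\tilde S)\,\Tr{V^2}\le\Tr{\tilde S V^2}\le\lambda_{\max}(\tilde S)\,\Tr{V^2}$ with the eigenvalue bounds for $\cov\in\B^{\FR}$ (stated in the paper as Lemma~\ref{lemma:compact}\ref{lemma:compact_iii}). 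The only difference is cosmetic: you compute the Hessian by differentiating $L$ along $t\mapsto\Exp_\cov(t\Omega)$ directly, whereas the paper invokes a general formula expressing the Riemannian Hessian through the Euclidean gradient and Hessian and then plugs in $\nabla f$ and $\nabla^2 f$ for $f(\cov)=\Tr{\cov^{-1}S}$.
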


\begin{remark}
Problem~\eqref{eq:FR} could also be addressed with the algorithmic framework developed in~\cite{liu2019riemannianconstraints}. Due to space limitations, we leave this for future research.
\end{remark}
\section{Generalized Likelihood Estimation under the KL Divergence}
\label{sect:KL}

The KL divergence, which is widely used in information theory \cite[\S~2]{cover2006elements}, can be employed as an alternative dissimilarity measure in the optimistic likelihood problem \eqref{eq:optimistic_MLE}.
%
If both $\psa$ and $\p$ are Gaussian distributions, then the KL divergence from $\psa$ to $\p$ admits an analytical expression.

\begin{proposition}[KL divergence for Gaussian distributions] \label{prop:KL}
	For any $\msa\in \R^n$ and $\cov_0, \cov_1 \in \PD^n$, the KL divergence from $\p_0 = \N(\msa, \cov_0)$ to $\p_1 = \N(\msa, \cov_1)$ amounts to
	\begin{align*}
	\KL(\p_0 \parallel \p_1) =\frac{1}{2} \big( \Tr{\cov_1^{-1} \cov_0} + \log \det(\cov_1 \cov_0^{-1}) - n \big).
	\end{align*}
\end{proposition}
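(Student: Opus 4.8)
The plan is to evaluate $\KL(\p_0 \parallel \p_1)$ directly from its integral definition as an expectation under $\p_0$. Recall that for the Gaussian densities $p_i(x) = (2\pi)^{-n/2}(\det \cov_i)^{-1/2}\exp\!\big(-\half (x-\msa)^\top \cov_i^{-1}(x-\msa)\big)$, $i \in \{0,1\}$, the normalizing constants $(2\pi)^{-n/2}$ cancel in the density ratio, so that
\[
\log \frac{p_0(x)}{p_1(x)} = \half \log\det\!\big(\cov_1 \cov_0^{-1}\big) - \half (x-\msa)^\top \cov_0^{-1}(x-\msa) + \half (x-\msa)^\top \cov_1^{-1}(x-\msa).
\]

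The next step is to take the expectation of the right-hand side with respect to $x \sim \p_0$, which by definition equals $\KL(\p_0 \parallel \p_1)$. The log-determinant term is deterministic and contributes $\half \log\det(\cov_1\cov_0^{-1})$. For each of the two quadratic forms I would invoke the elementary identity $\EE_{x\sim\p_0}\big[(x-\msa)^\top A (x-\msa)\big] = \Tr{A\, \EE_{x\sim\p_0}[(x-\msa)(x-\msa)^\top]} = \Tr{A\cov_0}$, which follows from the cyclic invariance of the trace together with the facts that $x\sim\p_0$ has mean $\msa$ and covariance $\cov_0$. Taking $A = \cov_0^{-1}$ gives $\Tr{I_n} = n$, and taking $A = \cov_1^{-1}$ gives $\Tr{\cov_1^{-1}\cov_0}$.

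Collecting the three contributions yields $\KL(\p_0 \parallel \p_1) = \half\big(\Tr{\cov_1^{-1}\cov_0} + \log\det(\cov_1\cov_0^{-1}) - n\big)$, which is the claimed formula. I do not anticipate any genuine obstacle here: the derivation is entirely elementary, and the only points requiring minor care are bookkeeping the direction of the divergence — the expectation is taken under the \emph{first} argument $\p_0$, which is why $\cov_0$ appears inside the trace — and tracking which covariance matrix enters each term. Everything else is the standard ``trace trick'' for Gaussian quadratic forms.
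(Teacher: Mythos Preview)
Your derivation is correct and is the standard textbook computation for the KL divergence between two equal-mean Gaussians. The paper itself does not supply a proof of this proposition: it is stated as a known closed-form expression and then used in the subsequent analysis, so there is no ``paper's own proof'' to compare against. Your argument---writing out the log-density ratio, taking the expectation under $\p_0$, and reducing the quadratic forms via the trace identity $\EE_{x\sim\p_0}\big[(x-\msa)^\top A (x-\msa)\big] = \Tr{A\cov_0}$---is exactly the canonical route and fills the gap cleanly.
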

Unlike the FR distance, the KL divergence is not symmetric. Proposition~\ref{prop:KL} implies that if the KL divergence is used as the dissimilarity measure, then the optimistic likelihood problem \eqref{eq:optimistic_MLE} reduces~to
\begin{equation} \label{eq:KL:prob}
\begin{array}{cl}
\Min{\cov \succ 0} & \left\{ \Tr{S \cov^{-1}} + \log \det \cov: \Tr{\cov^{-1}\covsa} + \log \det (\cov\covsa^{-1}) - n \leq 2\radius \right\},
\end{array}
\end{equation}
where $S=  M^{-1} \sum_{m=1}^M (x_m - \wh \m)(x_m - \wh \m)^\top$ denotes again the sample covariance matrix.
Because of the concave log-det terms in the objective and the constraints, problem~\eqref{eq:KL:prob} is non-convex. By using the variable substitution $X\leftarrow \cov^{-1}$, however, problem~\eqref{eq:KL:prob} can be reduced to a univariate convex optimization problem and thereby solved in quasi-closed form.

\begin{theorem}
	\label{thm:KL}
	For any $\covsa \succ 0$ and $\radius > 0$, the optimal value of problem~\eqref{eq:KL:prob} amounts to
	\begin{align*}
		(1+\dualvar\opt)\Tr{S (S + \dualvar\opt\covsa)^{-1}}  + \log\det (S + \dualvar\opt \covsa) - n \log(1+ \dualvar\opt),
	\end{align*}
	where $\dualvar\opt$ is the unique optimal solution of the univariate convex optimization problem
	\begin{align}
	\Min{\dualvar > 0} \Big\{ \dualvar (2\radius + \log \det \covsa) + n(1 + \dualvar) \log(1+\dualvar)  - (1 + \dualvar) \log \det (S + \dualvar \covsa) \Big\}. \label{eq:KL:cov}
	\end{align}
\end{theorem}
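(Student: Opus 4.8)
The plan is to reparametrize problem~\eqref{eq:KL:prob} in terms of the precision matrix $X \Let \cov^{-1}$, which turns it into a \emph{convex} program, and then apply Lagrangian duality. Under the bijection $\cov \mapsto \cov^{-1}$ of $\PD^n$ onto itself, problem~\eqref{eq:KL:prob} becomes
\begin{equation*}
\Min{X \succ 0} \Big\{ \Tr{S X} - \log\det X : \ \Tr{\covsa X} - \log\det X - \log\det\covsa - n \le 2\radius \Big\},
\end{equation*}
whose objective and constraint function are both convex, since $X \mapsto -\log\det X$ is convex on $\PD^n$ while $X \mapsto \Tr{S X}$ and $X \mapsto \Tr{\covsa X}$ are linear. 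I would first note that this problem is solvable: its feasible set is a compact subset of $\PD^n$, because the $-\log\det X$ term in the constraint keeps feasible points bounded away from $\partial\PD^n$, and because $\Tr{\covsa X}$ (with $\covsa \succ 0$) cannot be offset by the merely logarithmic growth of $\log\det X$, so the set is also bounded; hence the continuous objective attains its minimum. Moreover $X = \covsa^{-1}$ is strictly feasible — the constraint function equals $-2\radius < 0$ there — so Slater's condition holds and strong duality applies.

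Next I would compute the Lagrangian dual. For a multiplier $\dualvar \ge 0$ the Lagrangian is
\begin{equation*}
\mathcal{L}(X, \dualvar) = \Tr{(S + \dualvar\covsa)X} - (1+\dualvar)\log\det X - \dualvar\big(\log\det\covsa + n + 2\radius\big).
\end{equation*}
For $\dualvar > 0$ we have $S + \dualvar\covsa \succ 0$, so $\mathcal{L}(\cdot, \dualvar)$ is strictly convex and coercive on $\PD^n$ with the unique stationary point $\nabla_X \mathcal{L} = (S + \dualvar\covsa) - (1+\dualvar)X^{-1} = 0$, i.e., $X\opt(\dualvar) = (1+\dualvar)(S + \dualvar\covsa)^{-1}$ (for $\dualvar = 0$ with singular $S$ the infimum is $-\infty$, which is why the dual lives on $(0,\infty)$). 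Substituting $X\opt(\dualvar)$ and simplifying via $\Tr{(S+\dualvar\covsa)(S+\dualvar\covsa)^{-1}} = n$ and $\log\det\!\big((1+\dualvar)(S+\dualvar\covsa)^{-1}\big) = n\log(1+\dualvar) - \log\det(S+\dualvar\covsa)$ yields the dual function $g(\dualvar) = n - h(\dualvar)$, where $h$ is precisely the objective of~\eqref{eq:KL:cov}. Thus the dual problem $\Max{\dualvar > 0} g(\dualvar)$ is equivalent to~\eqref{eq:KL:cov}; the function $h$ is convex on $(0,\infty)$ (directly, or simply because $g$ is concave as an infimum of affine functions of $\dualvar$), and it is coercive as $\dualvar\to\infty$ because the coefficient $2\radius$ of its linear-growth term is positive (and also as $\dualvar\to 0^+$ when $S$ is singular). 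Uniqueness of the minimizer $\dualvar\opt$ then follows from strict convexity of $h$, or equivalently from uniqueness of the primal solution together with the stationarity relation $X\opt = (1+\dualvar)(S+\dualvar\covsa)^{-1}$.

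Finally I would assemble the optimal value. By strong duality and attainment of the dual optimum, there is a primal--dual optimal pair $(X\opt, \dualvar\opt)$ forming a saddle point of $\mathcal{L}$, so $X\opt$ minimizes $\mathcal{L}(\cdot, \dualvar\opt)$, giving $X\opt = (1+\dualvar\opt)(S + \dualvar\opt\covsa)^{-1}$; plugging this into the reparametrized primal objective $\Tr{S X} - \log\det X$ and using the same log-det identity produces exactly $(1+\dualvar\opt)\Tr{S(S+\dualvar\opt\covsa)^{-1}} + \log\det(S+\dualvar\opt\covsa) - n\log(1+\dualvar\opt)$. The same expression also drops out of $n - h(\dualvar\opt)$ after invoking the first-order condition $h'(\dualvar\opt) = 0$ together with the identity $n - \dualvar\opt\Tr{\covsa(S+\dualvar\opt\covsa)^{-1}} = \Tr{S(S+\dualvar\opt\covsa)^{-1}}$. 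The step I expect to be most delicate is the bookkeeping around the dual variable: confirming strong duality and attainment so that the saddle-point/complementary-slackness argument is legitimate, establishing existence, uniqueness and strict positivity of $\dualvar\opt$ on the open half-line, and treating the degenerate regime in which $\radius$ is so large that $\cov = S$ is already feasible (there the KL constraint is inactive, $\dualvar\opt = 0$, and the formula still holds but must be read as the limiting case); the remaining manipulations are routine matrix calculus.
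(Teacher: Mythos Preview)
Your proposal is correct and follows essentially the same route as the paper: reparametrize via the precision matrix $X=\cov^{-1}$ to obtain a convex program, verify Slater's condition at $X=\covsa^{-1}$, apply Lagrangian duality, solve the inner problem in closed form as $X\opt(\dualvar)=(1+\dualvar)(S+\dualvar\covsa)^{-1}$, and reduce to the one-dimensional dual on $\dualvar>0$ (treating the case $\dualvar=0$ separately when $S$ is singular). Your write-up is in fact somewhat more careful than the paper's on the side issues you flag as delicate---primal solvability, uniqueness of $\dualvar\opt$, and the boundary case where the constraint is inactive---none of which the paper spells out in detail.
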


Problem~\eqref{eq:KL:cov} can be solved efficiently using state-of-the-art first- or second-order methods, see Appendix~\ref{sect:appendix:derivatives}. However, in each iteration we still need to evaluate the determinant of a positive definite $n$-by-$n$ matrix, which requires $\mc O(n^{3})$ arithmetic operations. The following corollary shows that this computational burden can be alleviated when the sample covariance matrix $S$ has low rank.

\begin{corollary}[Singular sample covariance matrices] \label{corol:singular}
	If $S = \Chol \Chol^\top$ for some $\Chol \in \R^{n\times k}$ and $k\in\mathbb N$ with $k < n$, then problem~\eqref{eq:KL:cov} simplifies to
	\begin{align*}
	\Min{\dualvar > 0} \Big\{ 2\dualvar \radius + n(1+\dualvar) \log \left(1+\dualvar\right) - (n-k)(1+\dualvar) \log\dualvar -(1+\dualvar) \log \det (\dualvar I_k +  \Chol^\top \covsa^{-1} \Chol ) \Big\}. 
	\end{align*}
\end{corollary}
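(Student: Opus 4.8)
The plan is to reduce the whole statement to a single determinant computation: evaluating $\log\det(S+\dualvar\covsa)$ under the low-rank factorization $S=\Chol\Chol^\top$. First I would pull out $\covsa^\half$ on both sides,
\[
S+\dualvar\covsa=\covsa^\half\big(\dualvar I_n+\covsa^{-\half}\Chol\Chol^\top\covsa^{-\half}\big)\covsa^\half,
\]
so that $\det(S+\dualvar\covsa)=\det\covsa\cdot\det(\dualvar I_n+BB^\top)$ with $B\Let\covsa^{-\half}\Chol\in\R^{n\times k}$. Since $\mathrm{rank}(BB^\top)\le k<n$, the matrix $\dualvar I_n+BB^\top$ has the eigenvalue $\dualvar$ with multiplicity at least $n-k$, while its remaining eigenvalues are $\dualvar$ plus the nonzero eigenvalues of $BB^\top$, which coincide with those of $B^\top B=\Chol^\top\covsa^{-1}\Chol\in\PSD^k$.

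I would make this precise via the Weinstein–Aronszajn (Sylvester) determinant identity: for every $\dualvar>0$,
\[
\det(\dualvar I_n+BB^\top)=\dualvar^n\det(I_n+\dualvar^{-1}BB^\top)=\dualvar^n\det(I_k+\dualvar^{-1}B^\top B)=\dualvar^{n-k}\det(\dualvar I_k+\Chol^\top\covsa^{-1}\Chol).
\]
Taking logarithms gives $\log\det(S+\dualvar\covsa)=\log\det\covsa+(n-k)\log\dualvar+\log\det(\dualvar I_k+\Chol^\top\covsa^{-1}\Chol)$; since $\dualvar>0$ and $\Chol^\top\covsa^{-1}\Chol\succeq 0$, the argument of the last determinant is positive definite, so every term is well-defined, and the remaining per-iteration work drops from an $n\times n$ to a $k\times k$ determinant.

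Finally I would substitute this expression into the objective of~\eqref{eq:KL:cov} and collect terms. The $\log\det\covsa$ contributions collapse: $\dualvar\log\det\covsa-(1+\dualvar)\log\det\covsa=-\log\det\covsa$, a constant independent of $\dualvar$. Discarding it — which is legitimate because problem~\eqref{eq:KL:cov} enters Theorem~\ref{thm:KL} only through its minimizer $\dualvar\opt$, and a constant shift alters neither the minimizer nor the shape of the curve — leaves exactly
\[
2\dualvar\radius+n(1+\dualvar)\log(1+\dualvar)-(n-k)(1+\dualvar)\log\dualvar-(1+\dualvar)\log\det(\dualvar I_k+\Chol^\top\covsa^{-1}\Chol),
\]
which is the claimed form. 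There is no real obstacle here; the only points requiring care are invoking the determinant identity in the direction that lands the reduced problem on $\PSD^k$ rather than $\PD^n$, and the bookkeeping of the additive constant $-\log\det\covsa$ that is dropped in passing from~\eqref{eq:KL:cov} to the stated expression.
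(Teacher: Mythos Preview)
Your proposal is correct and follows essentially the same route as the paper: both factor $\covsa$ out of $S+\dualvar\covsa$, apply the Sylvester/Weinstein--Aronszajn determinant identity $\det(I_n+\dualvar^{-1}BB^\top)=\det(I_k+\dualvar^{-1}B^\top B)$ to collapse the $n\times n$ determinant to a $k\times k$ one, and then substitute back into~\eqref{eq:KL:cov} and discard the additive constant in $\log\det\covsa$. The paper additionally invokes the Woodbury formula for $(S+\dualvar\covsa)^{-1}$ before taking the determinant, but this is a cosmetic detour---your direct determinant computation is slightly cleaner and lands in the same place.
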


We will see that for classification problems the matrix $S$ has rank~1, in which case the log-det term in the above univariate convex program reduces to the scalar logarithm. In Appendix~\ref{sect:appendix:derivatives} we provide explicit first- and second-order derivatives of the objective of problem~\eqref{eq:KL:cov} and its simplification.

%
%
%

\section{Numerical Results}
\label{sect:awesome_numericals}

We investigate the empirical behavior of our projected geodesic gradient descent algorithm (Section~\ref{sect:awesome_numericals:the_convergence}) and the predictive power of our flexible discriminant rules (Section~\ref{sec:classification}). Our algorithm and all tests are implemented in Python, and the source code is available from~\url{https://github.com/sorooshafiee/Optimistic_Likelihoods}.

\subsection{Convergence Behavior of the Projected Geodesic Descent Algorithm}
\label{sect:awesome_numericals:the_convergence}

To study the empirical convergence behavior of Algorithm~\ref{alg:projected:geodesic}, for $n \in \{ 10, 20, \ldots, 100 \}$ we generate 100 covariance matrices $\covsa$ according to the following procedure. We \emph{(i)} draw a standard normal random matrix $B \in \mathbb{R}^{n \times n}$ and compute $A = B + B^\top$; we \emph{(ii)} conduct an eigenvalue decomposition $A = R D R^T$; we \emph{(iii)} replace $D$ with a random diagonal matrix $\wh D$ whose diagonal elements are sampled uniformly from $[1, 10]^n$; and we \emph{(iv)} set $\covsa = R \wh D R^\top$. For each of these covariance matrices, we set $\hat{\mu} = 0$, $M = 1$, $x_1^M \Let x$ for a standard normal random vector $x \in \mathbb{R}^n$ and calculate the optimistic likelihood~\eqref{eq:FR} for $\rho = \sqrt{n} / 100$. This choice of $\rho$ ensures that the radius of the ambiguity set scales with $n$ in the same way as the Frobenius norm. 
Figures~\ref{fig:algorithm:a} and \ref{fig:algorithm:b} report the number of iterations as well as the overall execution time of Algorithm~\ref{alg:projected:geodesic} when we terminate the algorithm as soon as the relative improvement $ |[L(\cov_{k+1}) - L(\cov_{k})] / L(\cov_{k+1}) | $ drops below $0.01\%$. Notice that the number of required iterations scales linearly with $n$ while the overall runtime grows polynomially with $n$. Figure~\ref{fig:algorithm:c} shows the relative improvement as a function of the iteration count. Empirically, the number of iterations scales with $\mc{O}(1 / k^2)$, which is faster than the theoretical rate established in Theorem~\ref{thm:convergence}. We also study the empirical convergence behavior of Algorithm~\ref{alg:projected:geodesic} when the input matrix $S$ is positive definite. We repeat the first experiment with $M=100$, and we set $S = \delta I + \sum_{i=1}^M x_i x_i^\top / M $ for $\delta = 10^{-6}$ to ensure that $S$ is positive definite. Figure~\ref{fig:algorithm:d} indicates that, in this case, the empirical convergence rate of Algorithm~\ref{alg:projected:geodesic} is linear.

\begin{figure*}
	\centering
	\subfigure[Scaling of iteration count for $S \succeq 0$]{\label{fig:algorithm:a} \includegraphics[width=0.45\columnwidth]{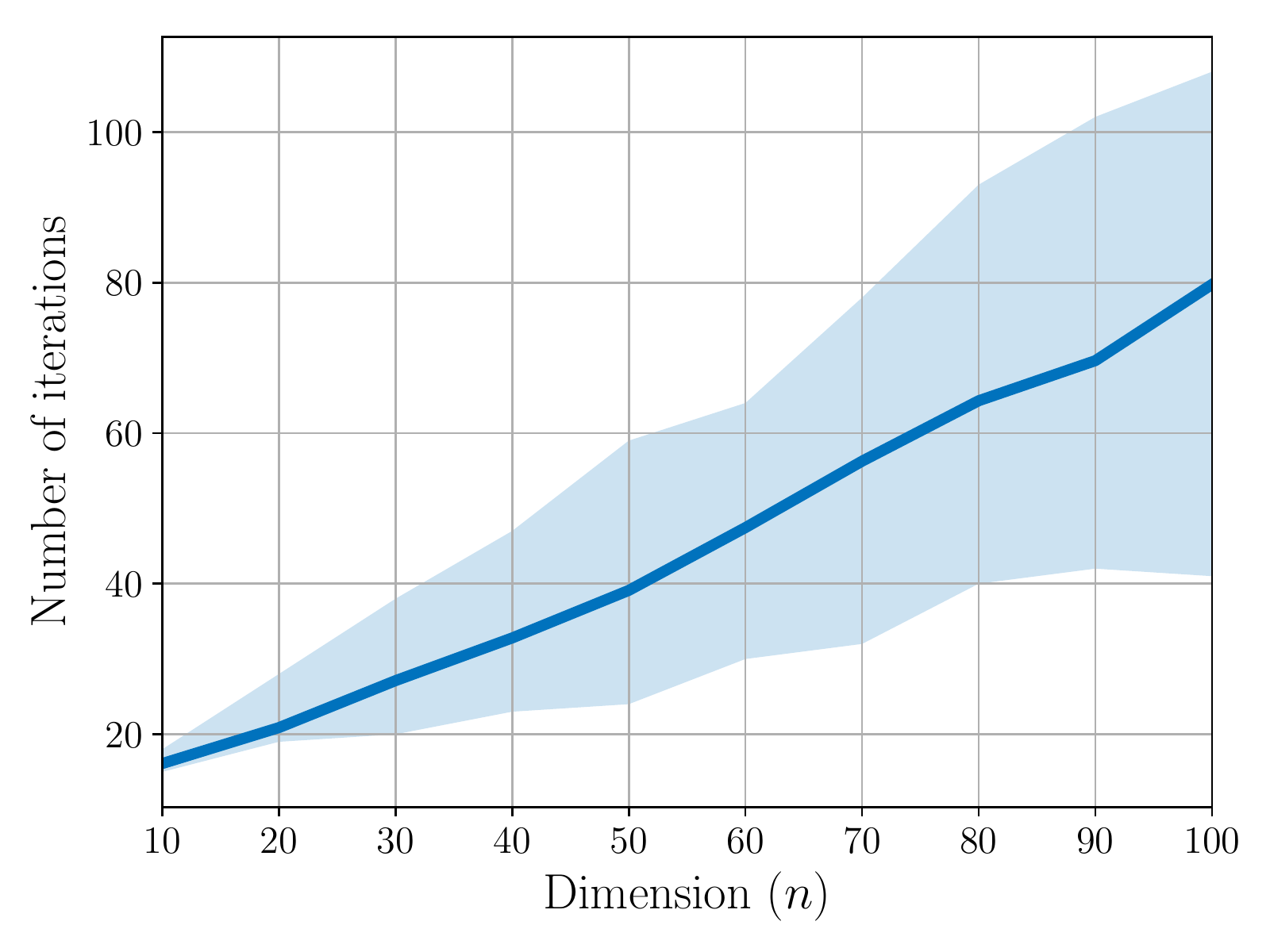}} \hspace{0pt}
	\subfigure[Scaling of execution time  for $S \succeq 0$]{\label{fig:algorithm:b} \includegraphics[width=0.45\columnwidth]{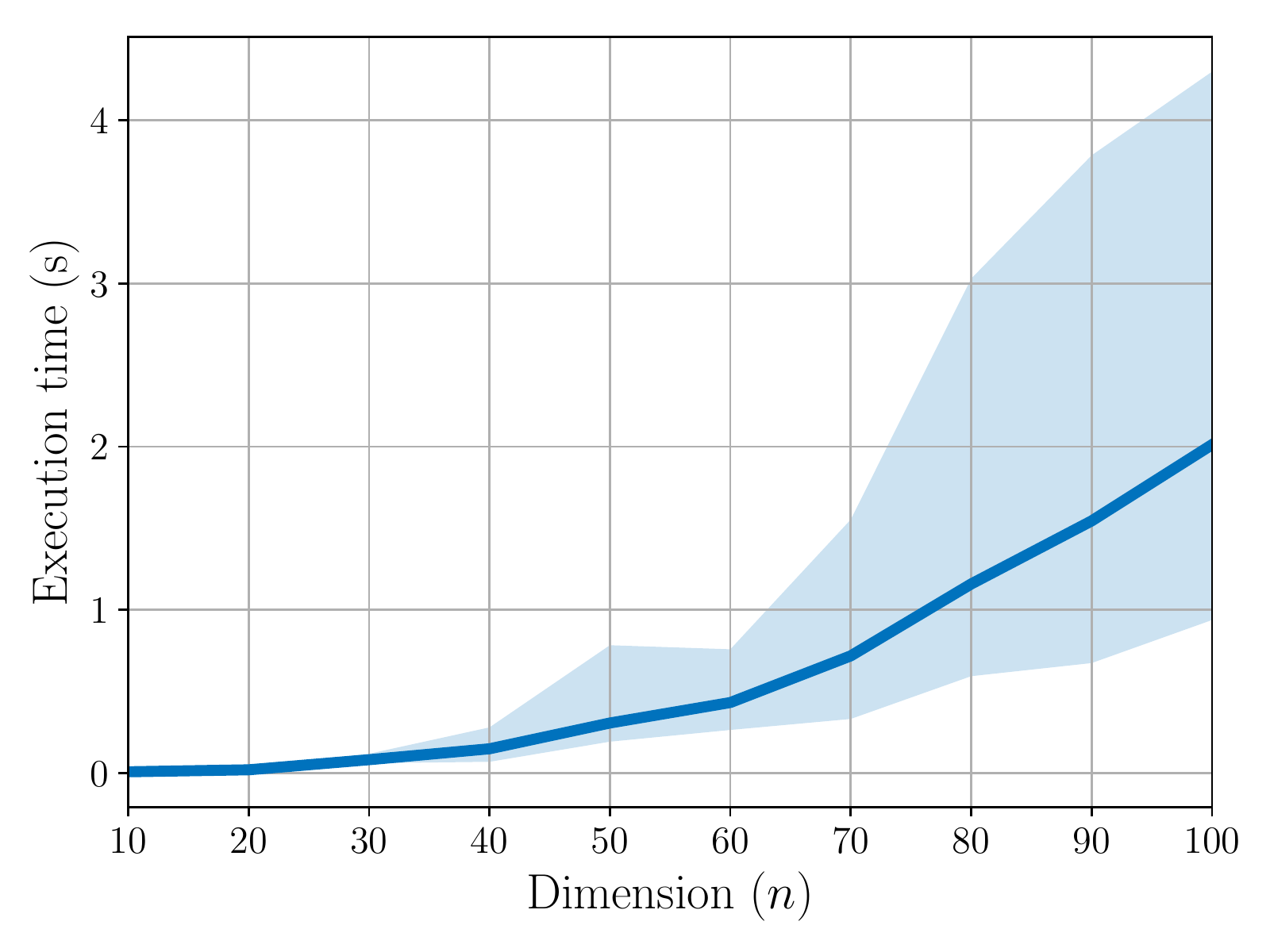}} \hspace{0pt} \\
	\subfigure[Convergence for $n=100$ for $S \succeq 0$]{\label{fig:algorithm:c} \includegraphics[width=0.45\columnwidth]{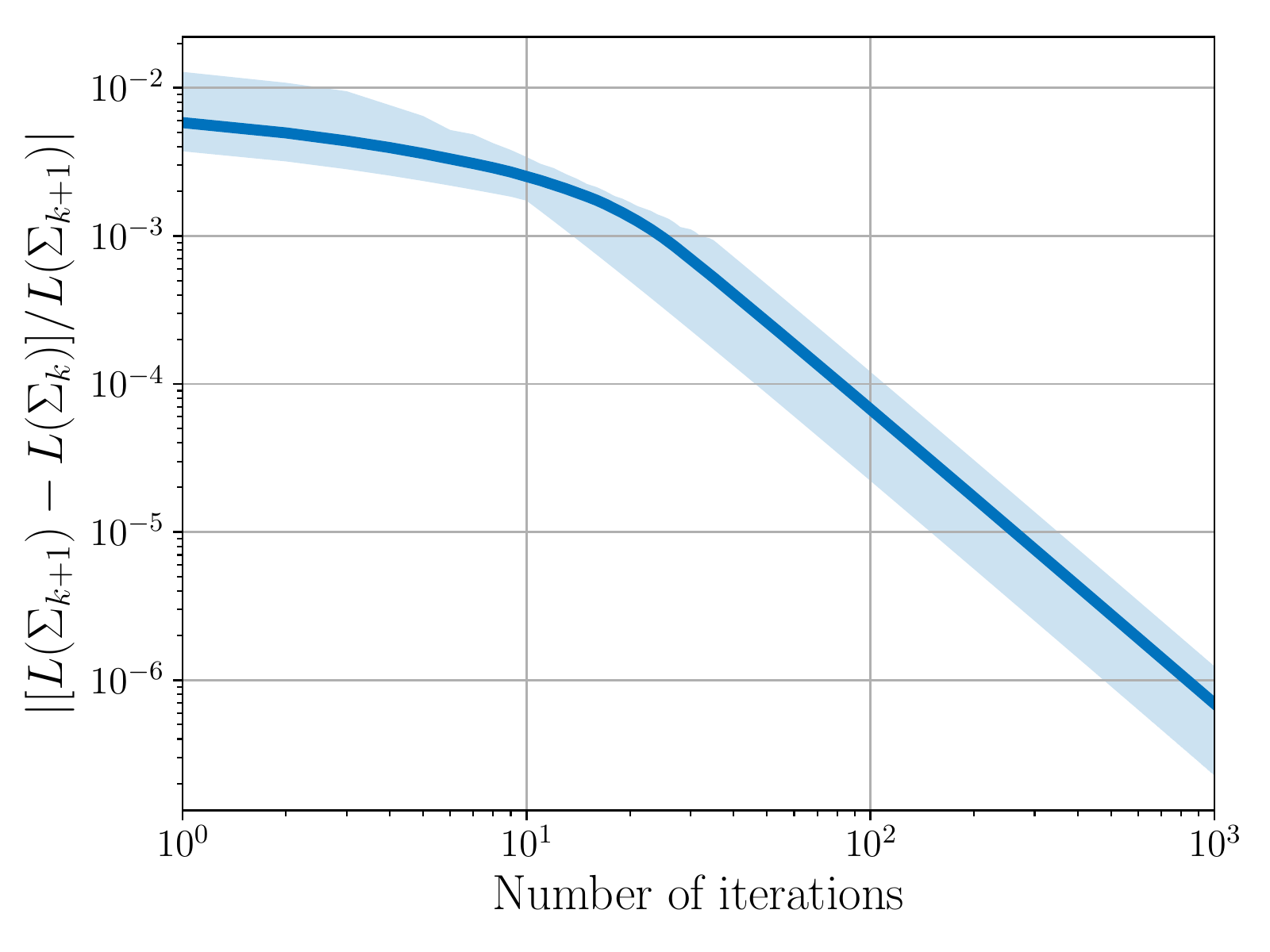}} \hspace{0pt}
	\subfigure[Convergence for $n=100$ for $S \succ 0$]{\label{fig:algorithm:d} \includegraphics[width=0.45\columnwidth]{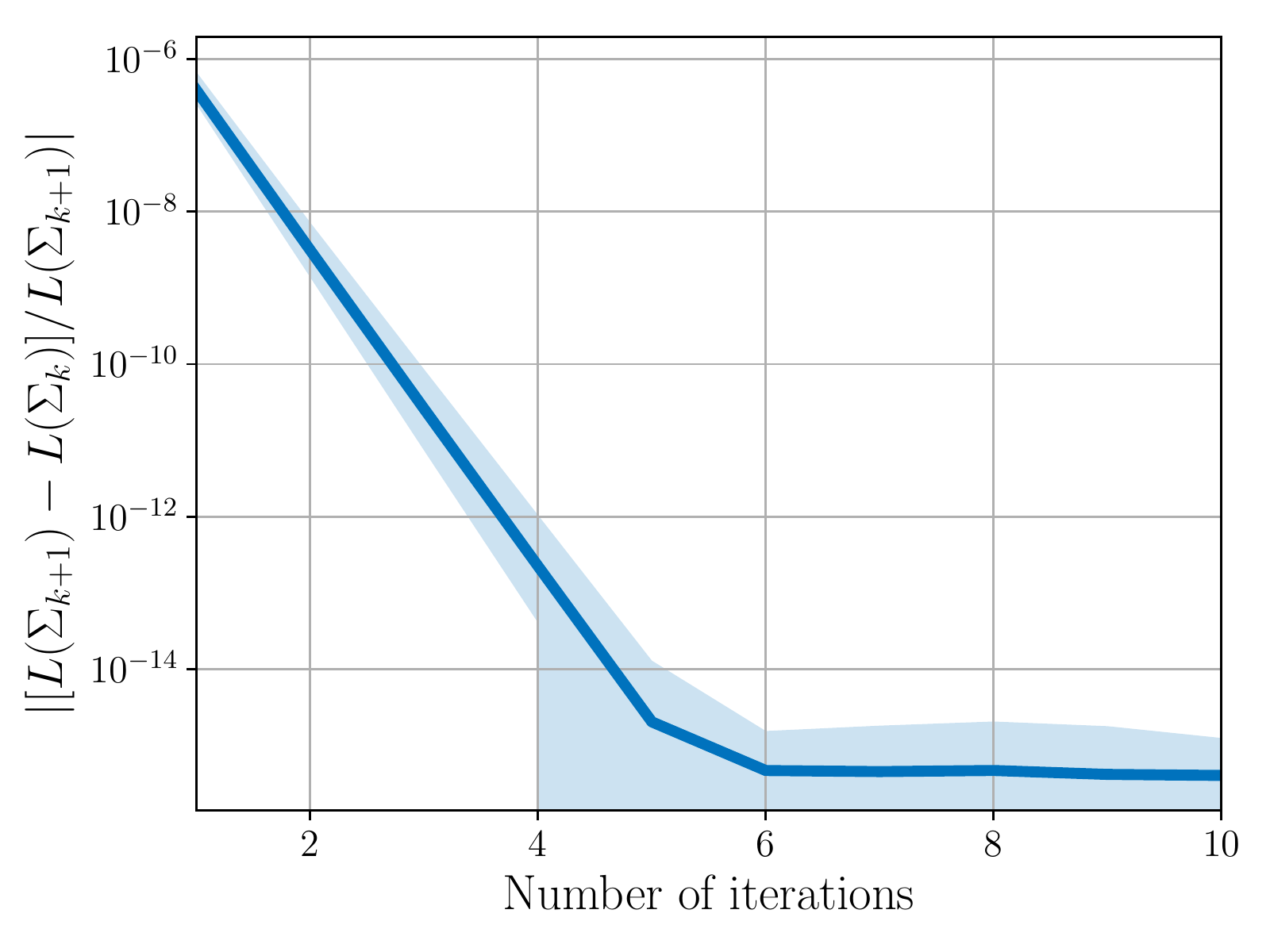}} \hspace{0pt}
	\captionsetup{skip=0pt}
	\caption{Convergence behavior of the projected geodesic gradient descent algorithm. Solid lines (shaded regions) represent averages (ranges) across 100 independent simulations.}
	\label{fig:algorithm}
\end{figure*}

\subsection{Application: Flexible Discriminant Rules}\label{sec:classification}
Consider a classification problem where a categorical response $Y \in \mathcal{C}$, $\mathcal{C} = \{ 1, \ldots, C \}$, should be predicted from continuous inputs $X \in \mathbb{R}^n$. In this context, Bayes' Theorem implies that $\mathbb{P} (Y = c | X = x) \propto \pi_c \cdot f_c (x)$, $c \in \mathcal{C}$, where $\pi_c = \mathbb{P} (Y = c)$ denotes the prior probability of the response belonging to class $c$, and $f_c$ is the density function of $X$ for an observation of class $c$. In practice, $\pi_c$ and $f_c$ are unknown and need to be estimated from a training data set $(\wh x_1, \wh y_1), \ldots, (\wh x_N, \wh y_N)$. Assuming that the densities $f_c$, $c \in \mathcal{C}$, correspond to Gaussian distributions with (unknown) class-specific means $\mu_c$ and covariance matrices $\Sigma_c$, the quadratic discriminant analysis (QDA) replaces $\pi_c$ with $\hat{\pi}_c = N_c / N$, where $N_c = | \{ i : \wh y_i = c \} |$, and $f_c$ with the density of the Gaussian distribution $\hat{\mathbb{P}}_c \sim \mathcal{N} (\hat{\mu}_c, \hat{\Sigma}_c)$, whose mean and covariance matrix are estimated from the training data, to classify a new observation $x$ using the discriminant rule
\begin{equation*}
\mathcal{C}_{\text{QDA}}(x) \in \mathop{\arg \max}_{c \in \mathcal{C}}
\left\{ \frac{1}{2} \ell (x, \hat{\mathbb{P}}_c) + \log (\hat{\pi}_c) \right\} \,.
\end{equation*}
Here, the likelihood $\ell (x, \hat{\mathbb{P}}_c)$ is defined as  in~\eqref{eq:classical_MLE} for $M = 1$.
If $\wh \pi_1 = \ldots = \wh \pi_C$, this classification rule reduces to the maximum likelihood discrimant rule~\citep[\S~14]{ref:Hardle-2015}. 

QDA can be sensitive to misspecifications of the empirical moments. To reduce this sensitivity, we replace the nominal Gaussian distributions $\hat{\mathbb{P}}_c$ with the Gaussian distributions $\mathbb{P}_c^\star$ that would have generated the sample $x$ with highest likelihood, among all Gaussian distributions in the vicinity of the nominal distributions $\hat{\mathbb{P}}_c$. This results in a \emph{flexible discriminant rule} of the form
\begin{equation*}
\mathcal{C}_{\text{flex}} (x) \in \mathop{\arg \max}_{c \in \mathcal{C}} \; \max_{\mathbb{P} \in \mathcal{P}_c}
\left\{ \frac{1}{2} \ell (x, \mathbb{P}) + \log (\hat{\pi}_c) \right\},
\end{equation*}
which makes use of the optimistic likelihoods~\eqref{eq:optimistic_MLE}. Here, $\mathcal{P}_c$ is the FR or KL ball centered at the nominal distribution $\hat{\mathbb{P}}_c$. To ensure that $\hat{\Sigma}_c \succ 0$ for all $c \in \mathcal{C}$, we use the Ledoit-Wolf covariance estimator~\cite{ledoit2004well}, which is parameter-free and returns a well-conditioned matrix by minimizing the mean squared error between the estimated and the real covariance matrix.

We compare the performance of our flexible discriminant rules with standard QDA implementations from the literature on datasets from the UCI repository~\citep{UCI2013}. Specifically, we compare the following methods.
\begin{itemize}[leftmargin=*]
	\item \textbf{FQDA} and \textbf{KQDA}: our flexible discriminant rules based on FR (FQDA) and KL (KQDA) ambiguity sets with radii $\rho_c$;
	\item \textbf{QDA}: regular QDA with empirical means and covariance matrices estimated from data;
	\item \textbf{RQDA}: regularized QDA based on the linear shrinkage covariance estimator $\covsa_c + \rho_c I_n$;
	\item \textbf{SQDA}: sparse QDA based on the graphical lasso covariance estimator~\cite{friedman2008sparse} with parameter $\rho_c$;
	\item \textbf{WQDA}: Wasserstein QDA based on the nonlinear shrinkage approach~\cite{nguyen2018distributionally} with parameter $\rho_c$.
\end{itemize}
All results are averaged across $100$ independent trials for $\rho_c \in \{ a \sqrt{n} \cdot 10^b: a \in \{ 1, \hdots, 9 \}, \; b \in \{ -3, -2, -1 \}  \}$. In each trial, we randomly select $75\%$ of the data for training and the remaining $25\%$ for testing. The size of the ambiguity set and the regularization parameter are selected using stratified $5$-fold cross validation. The performance of the classifiers is measured by the {\em correct classification rate} (CCR). The average CCR scores over the $100$ trials are reported in Table~\ref{table}.

\begin{table}[t]
	\centering
	\caption{Average correct classification rates on the benchmark instances}
	\label{table}
	\begin{tabular}{lcccccccc}
		\toprule
		&   FQDA &   KQDA &   QDA  &   RQDA &   SQDA &   WQDA \\
		\midrule
		Australian              &  80.68 &  \bf{83.68} &  80.03 &  79.76 &  80.73 &  79.94 \\
		Banknote authentication &  99.07 &  \bf{99.47} &  98.56 &  98.54 &  98.53 &  98.54 \\
		Climate model           &  94.46 &  \bf{94.55} &  91.78 &  92.72 &  94.42 &  92.78 \\
		Cylinder                &  70.69 &  70.67 &  67.10 &  70.33 &  \bf{70.99} &  70.34 \\
		Diabetic                &  \bf{75.97} &  74.53 &  74.19 &  74.60 &  74.70 &  75.04 \\
		Fourclass               &  \bf{80.13} &  79.97 &  79.32 &  79.32 &  79.32 &  79.33 \\
		German credit           &  74.50 &  74.60 &  71.41 &  \bf{76.18} &  74.99 &  76.31 \\
		Haberman                &  74.87 &  \bf{75.41} &  74.92 &  74.96 &  75.04 &  74.97 \\
		Heart                   &  \bf{84.23} &  83.31 &  81.42 &  82.62 &  84.17 &  82.42 \\
		Housing                 &  88.89 &  \bf{92.90} &  88.54 &  87.01 &  81.69 &  88.31 \\
		Ilpd                    &  57.42 &  \bf{57.83} &  55.18 &  54.97 &  55.45 &  55.15 \\
		Mammographic mass       &  80.66 &  80.85 &  80.37 &  80.88 &  \bf{81.05} &  80.65 \\
		Pima                    &  \bf{75.97} &  74.53 &  74.19 &  74.60 &  74.70 &  75.04 \\
		Ringnorm                &  \bf{98.69} &  98.65 &  98.56 &  98.56 &  98.65 &  98.56 \\
		\bottomrule
	\end{tabular}
\end{table}

\appendix
\renewcommand\thesection{Appendix~\Alph{section}}


\section{Justification for using Ambiguity Sets with Fixed Mean Vector}
\label{sect:appendix:justification}
\renewcommand\thesection{\Alph{section}}
\setcounter{equation}{0}
\renewcommand{\theequation}{A.\arabic{equation}}
\renewcommand{\thefigure}{A.\arabic{figure}}

We provide some empirical evidence to justify the ambiguity sets with a fixed mean vector used in Sections~\ref{sect:FR} and~\ref{sect:KL}. Towards this end, we first fix a matrix $A \in \R^{n \times n}$, where each element is drawn independently from a standard Gaussian distribution, and set $\cov \Let AA^\top$. We then generate $N \in \{20, \ldots, 100\}$ i.i.d.~samples $\wh x_1, \ldots, \wh x_N$ from the Gaussian distribution $\mbb{Q} = \N(0, \cov)$, and compute the empirical mean $\msa_N$ and the empirical covariance matrix $\covsa_N$ as
\[
\msa_N =\frac{1}{N} \sum_{i=1}^N \wh x_i \quad \text{and} \quad \covsa_N = \frac{1}{N-1} \sum_{i=1}^N (\wh x_i - \msa_N)(\wh x_i - \msa_N)^\top.
\]
We now construct two probability distributionss based on $\msa_N$ and $\covsa_N$, that is, we set
\[
\psa_{N, 0} = \N(0, \covsa_N) \quad \text{and} \quad \psa_{N, \cov} = \N(\msa_N, \cov).
\]
Notice that $\psa_{N, 0}$ has the same mean as the unknown probability distribution $\mbb Q$ that generates the data, while $\psa_{N, \cov}$ has the same covariance matrix as $\mbb{Q}$. 
In the following, we define the mean vector estimation error $\delta_{N, \text{mean}}$ and the covariance matrix estimation error $\delta_{N, \text{covariance}}$ as
\[
\delta_{N, \text{mean}} = \varphi(\psa_{N, \cov}, \mbb{Q}) \quad \text{and} \quad \delta_{N, \text{cov}} = \varphi(\psa_{N, 0}, \mbb{Q}), \quad 
\]	
respectively, where $\varphi(\cdot, \cdot)$ is a dissimilarity measure for distributions that can be set either to the Fisher-Rao metric (see Section~\ref{sect:FR}) or to the Kullback-Leibler divergence (see Section~\ref{sect:KL}). 

Figure~\ref{fig:compare} shows the average estimation error for different sample sizes $N$, where the average is taken over 500 independent simulation runs. We observe that the error in estimating the covariance matrix is one order of magnitude higher than the error in estimating the mean vector under both the KL divergence and the FR metric.

\begin{figure*}[h]
	\centering
	\subfigure[KL divergence]{\label{fig:compare:KL} \includegraphics[width=0.48\columnwidth]{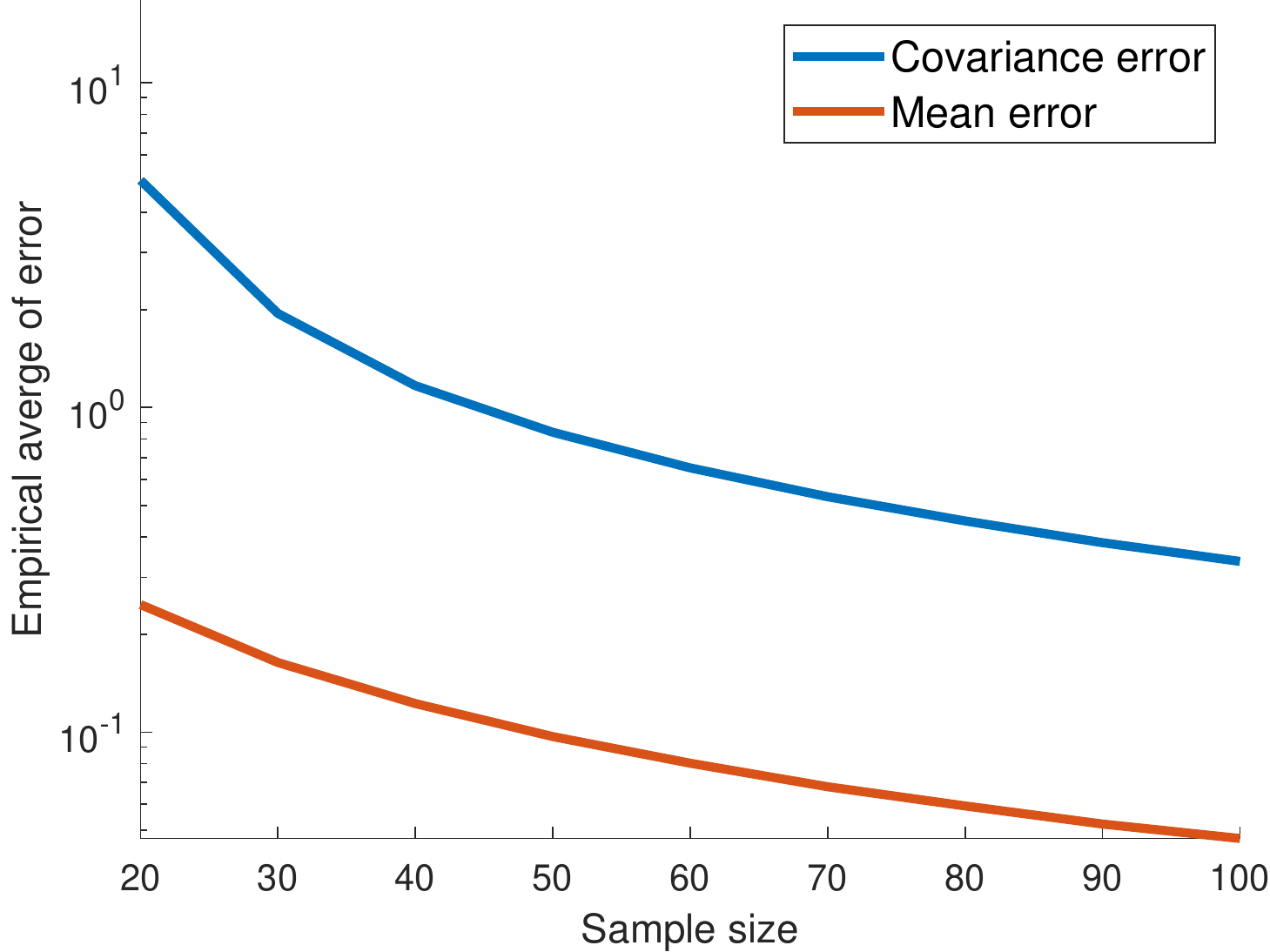}} \hspace{0pt}
	\subfigure[FR metric]{\label{fig:compare:FR} \includegraphics[width=0.48\columnwidth]{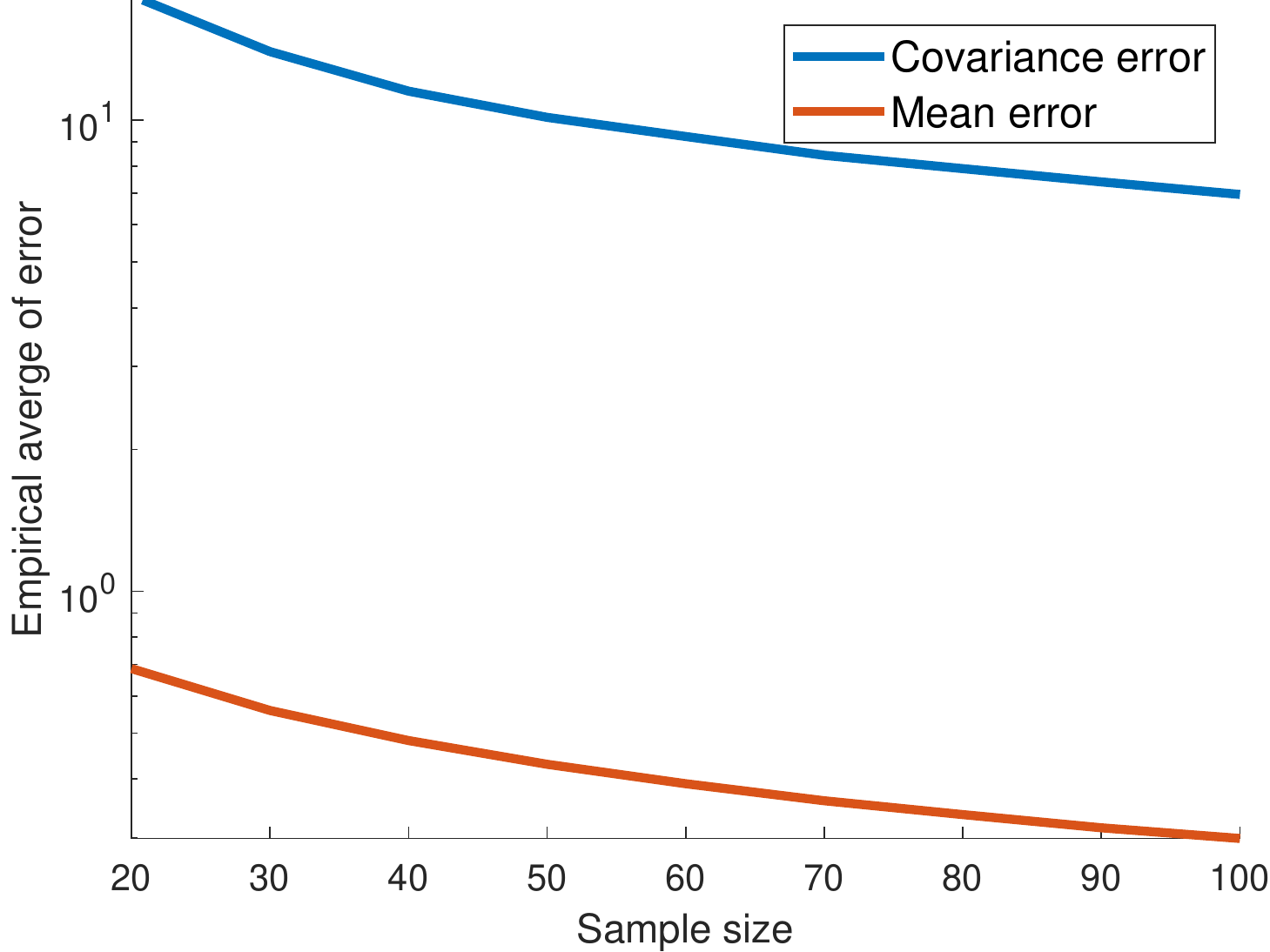}} \hspace{0pt}
	\captionsetup{skip=0pt}
	\caption{Average estimation error for different sample sizes $N$ using the KL divergence or the FR metric as a dissimilarity measure.}
	\label{fig:compare}
\end{figure*}

\renewcommand\thesection{Appendix~\Alph{section}}
\section{Optimistic Likelihood with Ambiguous Mean Vector}
\label{sect:mean}
\renewcommand\thesection{\Alph{section}}
\setcounter{equation}{0}
\renewcommand{\theequation}{B.\arabic{equation}}
We now consider the FR and KL ambiguity sets for the family of Gaussian distributions with a fixed covariance matrix $\covsa \in \PD^n$. We thus consider the manifold $\Theta = \R^n$ of the mean vector $\theta = \m$. The FR distance induced by the FR metric on this manifold is denoted by $\bar{d}(\cdot, \cdot)$ and is again available in closed form. 
\begin{proposition}[FR distance between Gaussian distributions~\cite{atkinson1981rao}]\label{proposition:FR:mean}
	If~$\N(\m_0, \covsa)$ and $ \N(\m_1, \covsa)$ are Gaussian distributions with identical covariance matrix $\covsa \in \PD^n$ and mean vectors $\m_0, \m_1 \in \R^n$, we have
	
	\begin{equation*}
	\bar{d}(\m_0, \m_1) = \sqrt{ (\mu_0 - \mu_1)^\top \covsa^{-1} (\mu_0 - \mu_1) }.
	\end{equation*}
\end{proposition}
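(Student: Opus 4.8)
The plan is to exploit the fact that, for the family of Gaussian densities $p_\m(x)$ of $\N(\m,\covsa)$ with \emph{fixed} covariance matrix $\covsa$ and free mean vector $\m$ ranging over $\Theta=\R^n$, the Fisher information matrix is \emph{constant} in $\m$, so that the FR metric is a flat, translation-invariant Riemannian metric and the FR distance collapses to an elementary Euclidean computation. First I would compute this Fisher information matrix. Writing $\log p_\m(x) = -\half (x-\m)^\top \covsa^{-1}(x-\m) - \half \log\det(2\pi\covsa)$, one obtains $\nabla_\m \log p_\m(x) = \covsa^{-1}(x-\m)$, and hence
\[
I_\m = \EE_x\big[\, \covsa^{-1}(x-\m)(x-\m)^\top \covsa^{-1} \,\big|\, \m \,\big] = \covsa^{-1}\covsa\,\covsa^{-1} = \covsa^{-1},
\]
which does not depend on $\m$. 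Therefore the FR inner product on every tangent space $T_\m\R^n \simeq \R^n$ is $\inner{\zeta_1}{\zeta_2}_\m = \zeta_1^\top \covsa^{-1}\zeta_2$.

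Next I would reduce the geodesic-length minimization to a standard Euclidean one via the linear change of coordinates $z = \covsa^{-\half}\m$. Under this map a smooth curve $\geodesic:[0,1]\to\R^n$ joining $\m_0$ to $\m_1$ is sent to the curve $\covsa^{-\half}\geodesic$ joining $\covsa^{-\half}\m_0$ to $\covsa^{-\half}\m_1$, and its FR length is preserved in the ordinary Euclidean sense:
\[
\int_0^1 \sqrt{\geodesic'(t)^\top \covsa^{-1}\geodesic'(t)}\,\dd t = \int_0^1 \big\| (\covsa^{-\half}\geodesic)'(t) \big\|_2\, \dd t .
\]
Hence $\bar d(\m_0,\m_1)$ equals the Euclidean geodesic distance between $\covsa^{-\half}\m_0$ and $\covsa^{-\half}\m_1$. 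To evaluate the latter I would invoke the classical fact that straight segments minimize Euclidean arc length: for any curve $\eta$ from $a$ to $b$ in $\R^n$, $\|b-a\|_2 = \big\| \int_0^1 \eta'(t)\,\dd t \big\|_2 \le \int_0^1 \|\eta'(t)\|_2\, \dd t$ by the triangle inequality, with equality for $\eta(t)=(1-t)a+tb$. Pulling this back through $\covsa^{\half}(\cdot)$ shows that the infimum defining $\bar d$ is attained by the segment $\geodesic(t)=(1-t)\m_0+t\m_1$ and equals $\big\|\covsa^{-\half}(\m_0-\m_1)\big\|_2 = \sqrt{(\m_0-\m_1)^\top \covsa^{-1}(\m_0-\m_1)}$, as claimed.

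I do not anticipate any genuine obstacle here: the argument is entirely routine once the Fisher information has been identified. The only point that deserves (minimal) care is precisely that identification and the observation that $I_\m$ is independent of $\m$ — it is this mean-independence that turns $(\R^n, \text{FR metric})$ into a flat space and reduces the statement to the Euclidean distance formula under the linear reparametrization $\m\mapsto \covsa^{-\half}\m$.
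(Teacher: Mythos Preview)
Your argument is correct and complete: the Fisher information for $\N(\m,\covsa)$ with $\covsa$ fixed is indeed $I_\m=\covsa^{-1}$ independently of $\m$, so the FR metric is flat and the geodesic distance reduces to the Mahalanobis distance via the isometry $\m\mapsto\covsa^{-\half}\m$. The paper itself does not prove this proposition but merely cites~\cite{atkinson1981rao}, so your self-contained derivation actually supplies more than what appears in the text.
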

Similarly, the KL divergence between two distributions with the same covariance matrix admits a simple analytical expression.
\begin{proposition}[KL divergence between Gaussian distributions]\label{prop:KL:mean}
	For any $\covsa\in \PD^n$ and $\m_0$, $\m_1 \in \R^n$, the KL divergence from $\p_0= \N(\m_0, \covsa)$ to $\p_1 = \N(\m_1, \covsa)$ amounts to
	\begin{equation*}
	\KL(\p_0 \parallel \p_1) = \frac{1}{2} (\m_0 - \m_1)^\top \covsa^{-1} (\m_0 - \m_1) .
	\end{equation*}
\end{proposition}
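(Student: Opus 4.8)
The plan is to evaluate the Kullback--Leibler divergence directly from its definition, exploiting the cancellations that occur when the two Gaussians share the same covariance matrix $\covsa$. Write $p_i$ for the density of $\p_i = \N(\m_i, \covsa)$, $i=0,1$. Since the normalizing constants $(2\pi)^{-n/2}(\det\covsa)^{-1/2}$ are identical, the log-density ratio is purely quadratic,
\[
\log\frac{p_0(x)}{p_1(x)} = \half (x-\m_1)^\top \covsa^{-1}(x-\m_1) - \half (x-\m_0)^\top \covsa^{-1}(x-\m_0).
\]
First I would substitute $x = \m_0 + y$, so that under $x\sim\p_0$ the vector $y$ has zero mean, and expand the first quadratic form around $\m_0$. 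Setting $\Delta \Let \m_0 - \m_1$ and using the symmetry of $\covsa^{-1}$, the $y^\top\covsa^{-1}y$ terms cancel and one is left with $\log(p_0(x)/p_1(x)) = \Delta^\top\covsa^{-1} y + \half\,\Delta^\top\covsa^{-1}\Delta$.

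The final step is to take the expectation with respect to $x\sim\p_0$. The linear term $\Delta^\top\covsa^{-1}y$ has expectation zero because $\EE_{x\sim\p_0}[y]=0$, while the constant term survives, giving
\[
\KL(\p_0 \parallel \p_1) = \EE_{x\sim\p_0}\!\left[\log\frac{p_0(x)}{p_1(x)}\right] = \half\,\Delta^\top\covsa^{-1}\Delta = \half\,(\m_0-\m_1)^\top\covsa^{-1}(\m_0-\m_1),
\]
which is the claimed identity.

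An equivalent, slightly shorter route is to invoke the affine-reparameterization invariance of the KL divergence: the invertible map $x\mapsto\covsa^{-1/2}(x-\m_1)$ pushes $\p_0$ forward to $\N\!\big(\covsa^{-1/2}(\m_0-\m_1),\, I_n\big)$ and $\p_1$ to $\N(0, I_n)$, reducing the claim to the elementary identity $\KL(\N(v,I_n)\parallel\N(0,I_n)) = \half\|v\|_2^2$ with $v = \covsa^{-1/2}(\m_0-\m_1)$. Either way there is no genuine obstacle; the only thing to watch is the bookkeeping in the expansion of the quadratic forms and the observation that the cross term vanishes in expectation — equivalently, that in the general Gaussian KL formula underlying Proposition~\ref{prop:KL} the trace term $\Tr{\covsa^{-1}\covsa} = n$ cancels the $-n$ and the log-determinant term $\log\det(\covsa\covsa^{-1}) = 0$ drops out, leaving only the Mahalanobis term.
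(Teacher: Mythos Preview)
Your proof is correct. The paper does not actually supply a proof of this proposition; it is stated as a standard fact (just as the companion Proposition~\ref{prop:KL} for fixed mean and varying covariance is stated without proof). Your direct computation via the log-density ratio is the canonical argument, and the alternative affine-invariance route you sketch is also valid. There is nothing to compare against in the paper, and nothing to fix in your write-up.
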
  
Throughout this section, we denote by $\psa = \N(\msa, \covsa)$ and $\p = (\m, \covsa)$ two Gaussian distributions with the same covariance matrix $\covsa \in \PD^n$ but different mean vectors $\msa, \m \in \R^n$, respectively. Propositions~\ref{proposition:FR:mean} and~\ref{prop:KL:mean} imply that the FR distance and the KL divergence of $\psa$ and $\p$ satisfy the relation\footnote{More generally, for arbitrary parametric families of distributions, the second-order Taylor expansion of the KL divergence is given by the FR distance because $
	\KL(\psa \parallel \p) = \frac{1}{2} \FR^2 (\psa, \p) + \mathcal{O}(\FR^3 (\psa, \p)).
	$ See \cite[\S~7.2.2]{kass2011geometrical} for further details.} $2\KL(\psa \parallel \p) = \bar{d}^2 (\msa, \m)$.

With the Fisher-Rao distance as our dissimilarity measure $\varphi$ and given the observations $x_1^M$, the optimistic likelihood problem~\eqref{eq:optimistic_MLE} becomes
\begin{equation} \label{eq:mean:prob}
\begin{aligned}
\Min{\m}  \left\{ \displaystyle \frac{1}{M} \sum_{m=1}^M (x_m - \m)^\top \covsa^{-1} (x_m - \m) + \log \det \covsa : \; (\m - \msa)^\top \covsa^{-1} (\m - \msa) \leq \radius^2 \right\}.
\end{aligned}
\end{equation}

Problem~\eqref{eq:mean:prob} is already a finite convex program but can be further simplified to a univariate convex optimization problem and therefore solved in quasi-closed form.

\begin{theorem}[Optimistic likelihood with mean ambiguity set] \label{thm:mean:prob}
	For any $\msa \in \R^n, \covsa \in \PD^n$ and $\rho > 0$, the optimal value of problem~\eqref{eq:mean:prob} is given by
	\[
	\frac{1}{M} \sum_{m=1}^M (x_m - \m\opt)^\top \covsa^{-1} (x_m - \m\opt) + \log\det \covsa,
	\]
	where $\m\opt = (1+\dualvar\opt)^{-1} \left( \bar{x} + \dualvar\opt \msa \right)$, and $\dualvar\opt$ solves the univariate convex optimization problem
	\begin{equation} \label{eq:KL:mean}
	\Min{\dualvar \geq 0} \; \dualvar \left( \radius^2 - \msa^\top \covsa^{-1} \msa  \right) + \frac{\left( \bar x + \dualvar \msa \right)^\top \covsa^{-1} \left( \bar x + \dualvar \msa \right)}{1+ \dualvar}
	\end{equation}
	with $\bar x = M^{-1} \sum_{m=1}^M x_m$.
\end{theorem}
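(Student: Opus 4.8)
The plan is to treat \eqref{eq:mean:prob} as a convex quadratically constrained quadratic program and dualize its single ellipsoidal constraint. Since $\covsa$ is fixed, the term $\log\det\covsa$ is a constant that can be dropped and restored at the end; the remaining objective $f_0(\m)\Let\frac1M\sum_{m=1}^M(x_m-\m)^\top\covsa^{-1}(x_m-\m)$ is a strictly convex quadratic in $\m$ because $\covsa^{-1}\succ 0$, and the feasible set is a compact ellipsoid, so a primal minimizer exists. Moreover $\m=\msa$ makes the constraint hold strictly (its left-hand side equals $0<\radius^2$), so Slater's condition holds; hence strong duality is in force and the Karush--Kuhn--Tucker conditions are necessary and sufficient for optimality.

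Introducing a multiplier $\dualvar\ge 0$, the Lagrangian is $f_0(\m)+\dualvar\big((\m-\msa)^\top\covsa^{-1}(\m-\msa)-\radius^2\big)$, which after collecting terms equals $(1+\dualvar)\,\m^\top\covsa^{-1}\m-2(\bar x+\dualvar\msa)^\top\covsa^{-1}\m$ plus a term independent of $\m$. Minimizing this strictly convex quadratic over $\m$ gives the unique stationary point $\m(\dualvar)=(1+\dualvar)^{-1}(\bar x+\dualvar\msa)$ --- precisely the $\m\opt$ in the statement, evaluated at the dual optimizer. Substituting this back yields the dual function $g(\dualvar)=\frac1M\sum_{m=1}^M x_m^\top\covsa^{-1}x_m-h(\dualvar)$, where $h(\dualvar)=\dualvar(\radius^2-\msa^\top\covsa^{-1}\msa)+(\bar x+\dualvar\msa)^\top\covsa^{-1}(\bar x+\dualvar\msa)/(1+\dualvar)$ is exactly the objective of \eqref{eq:KL:mean}. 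Thus the dual problem $\max_{\dualvar\ge 0}g(\dualvar)$ is equivalent to $\min_{\dualvar\ge 0}h(\dualvar)$ up to the additive constant $\frac1M\sum_m x_m^\top\covsa^{-1}x_m$.

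Next I would check that \eqref{eq:KL:mean} is well posed. For convexity, the substitution $t=1+\dualvar$ rewrites $(\bar x+\dualvar\msa)^\top\covsa^{-1}(\bar x+\dualvar\msa)/(1+\dualvar)$ as $t^{-1}(\bar x-\msa)^\top\covsa^{-1}(\bar x-\msa)+2(\bar x-\msa)^\top\covsa^{-1}\msa+t\,\msa^\top\covsa^{-1}\msa$, a convex function of $t>0$ plus an affine term (equivalently, it is the perspective of $y\mapsto y^\top\covsa^{-1}y$ composed with an affine map), so $h$ is convex on $\dualvar\ge 0$. For coercivity, as $\dualvar\to\infty$ the last term grows like $\dualvar\,\msa^\top\covsa^{-1}\msa$ to leading order, so $h(\dualvar)$ grows like $\dualvar\radius^2\to\infty$ because $\radius>0$; hence a minimizer $\dualvar\opt$ of \eqref{eq:KL:mean} exists, and it is the dual optimizer.

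Finally, by strong duality the optimal value of \eqref{eq:mean:prob} equals $g(\dualvar\opt)+\log\det\covsa$; and since $(\m\opt,\dualvar\opt)$ with $\m\opt=\m(\dualvar\opt)$ satisfies all KKT conditions, $\m\opt$ is a primal optimizer, so the optimal value also equals $f_0(\m\opt)+\log\det\covsa=\frac1M\sum_{m=1}^M(x_m-\m\opt)^\top\covsa^{-1}(x_m-\m\opt)+\log\det\covsa$, the claimed expression. Equivalently one may verify this identity by direct algebra, but then the step $h(\dualvar\opt)=2\bar x^\top\covsa^{-1}\m\opt-(\m\opt)^\top\covsa^{-1}\m\opt$ collapses exactly to the complementary-slackness relation $\dualvar\opt\big((\m\opt-\msa)^\top\covsa^{-1}(\m\opt-\msa)-\radius^2\big)=0$. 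The only non-mechanical points are this observation --- recognizing that matching the primal value with $f_0(\m\opt)$ relies on complementary slackness rather than a blind substitution --- together with the convexity and coercivity check for \eqref{eq:KL:mean}; everything else is routine quadratic algebra.
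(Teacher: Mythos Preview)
Your proof is correct and follows essentially the same route as the paper: drop the constant $\log\det\covsa$, dualize the single ellipsoidal constraint using Slater's condition at $\m=\msa$, solve the inner quadratic in closed form to obtain $\m(\dualvar)=(1+\dualvar)^{-1}(\bar x+\dualvar\msa)$, and identify the resulting dual objective with \eqref{eq:KL:mean}. Your additional verification of convexity and coercivity of $h$ (hence existence of $\dualvar\opt$) is a detail the paper leaves implicit, so if anything your argument is slightly more complete.
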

\begin{proof}
	As $\covsa$ is constant, the minimizers of~\eqref{eq:mean:prob} also solve
	\begin{equation} \label{eq:mean:prob:equiv}
	\begin{aligned}
	\Min{\m}  \left\{ \displaystyle M^{-1} \sum_{m=1}^M (x_m - \m)^\top \covsa^{-1} ( x_m - \m) : \; (\m - \msa)^\top \covsa^{-1} (\m - \msa) \leq \radius^2 \right\}.
	\end{aligned}
	\end{equation}
	Problem~\eqref{eq:mean:prob:equiv} is equivalent to
	\begin{align*}
	& \Min{\m} \Max{\dualvar \geq 0} \left\{ 
	\begin{array}{l}
	\inner{\covsa^{-1}}{ M^{-1} \displaystyle\sum_{m=1}^M (\m -  x_m)(\m - x_m)^\top} + \dualvar \left( \inner{\covsa^{-1}}{(\m - \msa)(\m - \msa)^\top} - \radius^2 \right)
	\end{array}
	\right\} \\
	=& \Max{\dualvar \geq 0} \Min{\m} \left\{ 
	\begin{array}{l}
	-\dualvar \radius^2 + \inner{\covsa^{-1}}{M^{-1}\displaystyle\sum_{m=1}^M (\m -  x_m)(\m -  x_m)^\top + \dualvar (\m - \msa)(\m - \msa)^\top}
	\end{array}
	\right\} ,
	\end{align*}
	where the equality follows from strong duality, which holds because $\radius > 0$ and because $\m = \msa$ constitutes a Slater point for the primal problem~\eqref{eq:mean:prob:equiv}. For any fixed $\dualvar \geq 0$, the inner minimization problem over $\m$ admits the optimal solution
	\[
	\m\opt(\dualvar) = (1+\dualvar)^{-1} \left( \bar{x} + \dualvar \msa \right)
	\]
	with $\bar x = M^{-1} \sum_{m=1}^M x_m$. Thus, the optimal value of~\eqref{eq:mean:prob:equiv} equals
	\begin{equation} \notag
	\Max{\dualvar \geq 0} \; \dualvar \left( \msa^\top \covsa^{-1} \msa  - \radius^2 \right) - \frac{\left( \bar x + \dualvar \msa \right)^\top \covsa^{-1} \left( \bar x + \dualvar \msa \right)}{1+ \dualvar},
	\end{equation}
	which is equivalent to the minimization problem~\eqref{eq:KL:mean}.	By strong duality, given any minimizer $\dualvar\opt$ of problem~\eqref{eq:KL:mean}, an optimal solution for~\eqref{eq:mean:prob:equiv} and also for~\eqref{eq:mean:prob} can be constructed as
	\[
	\m\opt = (1+\dualvar\opt)^{-1} \left( \bar{x} + \dualvar\opt \msa \right).
	\] 
	Substituting $\m\opt$ into the objective function of~\eqref{eq:mean:prob} yields the postulated optimal value. 
\end{proof}

In the following, we provide the first- and second-order derivatives of the objective function of~\eqref{eq:KL:mean}, which can be used for implementing the optimization algorithm to solve for $\dualvar\opt$. To this end, we denote by $g(\dualvar)$ the objective function of~\eqref{eq:KL:mean}. A direct calculation shows that
\begin{align*}
g'(\dualvar) &= \left(\radius^2 - \msa^\top \covsa^{-1} \msa  \right) + \frac{((2+ \dualvar)\msa - \bar{x})^\top \covsa^{-1} (\bar{x}+ \dualvar \msa)}{(1+\dualvar)^2}.
\end{align*}
Moreover, the second-order derivative of $g(\dualvar)$ is given by
\begin{align*}
g''(\dualvar) =  \frac{2\msa^\top \covsa^{-1}\msa}{(1+\dualvar)} - \frac{2[(2+\dualvar)\msa - \bar x]^\top \covsa^{-1} (\bar x + \dualvar \msa)}{(1+\dualvar)^3}.
\end{align*}

\renewcommand\thesection{Appendix~\Alph{section}}
\section{Proofs of Section~\ref{sect:FR}}
\label{sect:proof}
\renewcommand\thesection{\Alph{section}}
\setcounter{equation}{0}
\renewcommand{\theequation}{C.\arabic{equation}}
To prove Lemma~\ref{lemma:opt_exist}, we require the following preparatory lemma.

\begin{lemma}[Properties of $\B^{\FR}$]\label{lemma:compact}
	The FR ball has the following properties:
	\begin{enumerate}[wide = 0pt, labelwidth = 2em, labelsep*=0em, itemindent = 0pt, leftmargin = \dimexpr\labelwidth + \labelsep\relax, noitemsep,topsep = 1ex, font=\normalfont, label=(\roman*)]
		\item \label{lemma:compact_i}$\B^{\FR}$ is compact and complete on $\PD^n$. 
		\item \label{lemma:compact_iii}For any $\cov\in \B^{\FR}$, we have $\eig_{\min}(\covsa) e^{-\sqrt{2}\rho}\cdot  I_n \preceq \cov \preceq \eig_{\max}(\covsa) e^{\sqrt{2}\rho}\cdot I_n$.
	\end{enumerate}
\end{lemma}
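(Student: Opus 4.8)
The plan is to establish part~\ref{lemma:compact_iii} first and then deduce part~\ref{lemma:compact_i} from it. For~\ref{lemma:compact_iii}, fix $\cov \in \B^{\FR}$ and set $M \Let \covsa^{-\half}\cov\covsa^{-\half} \in \PD^n$. By the closed-form expression~\eqref{eq:FR:cov} for the FR distance, the constraint $d(\cov,\covsa)\le\radius$ is equivalent to $\|\log M\|_F^2 \le 2\radius^2$. Writing $\eig_1(M),\dots,\eig_n(M)>0$ for the eigenvalues of $M$, we have $\|\log M\|_F^2 = \sum_{i=1}^n (\log\eig_i(M))^2$, so every eigenvalue satisfies $|\log \eig_i(M)|\le\sqrt{2}\radius$, \ie $e^{-\sqrt{2}\radius}\le \eig_i(M)\le e^{\sqrt{2}\radius}$. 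Hence $e^{-\sqrt{2}\radius} I_n \preceq M \preceq e^{\sqrt{2}\radius} I_n$. Applying the congruence transformation $N\mapsto \covsa^{\half} N \covsa^{\half}$, which preserves the L\"owner order, gives $e^{-\sqrt{2}\radius}\covsa \preceq \cov \preceq e^{\sqrt{2}\radius}\covsa$, and combining this with $\eig_{\min}(\covsa) I_n \preceq \covsa \preceq \eig_{\max}(\covsa) I_n$ yields the claimed two-sided bound.

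For part~\ref{lemma:compact_i}, note that $\cov\mapsto d(\cov,\covsa)$ is continuous on $\PD^n$, since the matrix logarithm and the Frobenius norm in~\eqref{eq:FR:cov} are continuous, so $\B^{\FR}$, being a sublevel set of this function, is closed relative to $\PD^n$. By part~\ref{lemma:compact_iii}, $\B^{\FR}$ is contained in the matrix interval $\{\cov\in\Sym^n : \eig_{\min}(\covsa)e^{-\sqrt{2}\radius} I_n \preceq \cov \preceq \eig_{\max}(\covsa)e^{\sqrt{2}\radius} I_n\}$, which is a bounded subset of $\Sym^n$ that is bounded away from the singular matrices. Consequently $\B^{\FR}$ is bounded, and any limit point of $\B^{\FR}$ in $\Sym^n$ still satisfies $\cov \succeq \eig_{\min}(\covsa)e^{-\sqrt{2}\radius} I_n \succ 0$, hence lies in $\PD^n$ and therefore in $\B^{\FR}$; thus $\B^{\FR}$ is closed in $\Sym^n$. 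By the Heine--Borel theorem $\B^{\FR}$ is compact, and a compact metric space is complete with respect to $d$ (which induces the subspace topology on $\PD^n$), which gives completeness. Alternatively, one may appeal directly to the fact recalled in the text that $(\PD^n, d)$ is a Hadamard manifold, hence complete, so that the closed and bounded set $\B^{\FR}$ is compact by Hopf--Rinow.

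The argument is largely routine; the only points that require some care are (a) justifying the continuity of $d(\cdot,\covsa)$ together with the fact that the metric topology of $(\PD^n,d)$ coincides with the subspace topology inherited from $\Sym^n$, which is what permits translating ``relatively closed sublevel set'' into ``closed in $\Sym^n$'', and (b) using the \emph{lower} eigenvalue bound from part~\ref{lemma:compact_iii} to ensure that no limit point escapes to the boundary of singular positive semidefinite matrices. Neither poses a genuine difficulty.
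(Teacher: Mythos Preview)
Your proof is correct. The argument for part~\ref{lemma:compact_iii} is essentially identical to the paper's: both bound the individual eigenvalues of $\covsa^{-\half}\cov\covsa^{-\half}$ via the Frobenius-norm constraint and then translate this back to $\cov$.

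For part~\ref{lemma:compact_i} your route differs slightly in emphasis. The paper proves~\ref{lemma:compact_i} first, appealing directly to the Hadamard structure of $(\PD^n,d)$ and the Hopf--Rinow theorem to conclude that the closed, bounded metric ball $\B^{\FR}$ is compact, and then invokes the coincidence of the metric and subspace topologies. You instead establish~\ref{lemma:compact_iii} first and use the resulting eigenvalue bounds to argue, via Heine--Borel in the ambient $\Sym^n$, that $\B^{\FR}$ is compact, with Hopf--Rinow offered only as an alternative. Your approach is more elementary and self-contained (it never really needs the Riemannian completeness of $\PD^n$), at the cost of having to check explicitly that no limit point escapes to the singular boundary; the paper's approach is quicker once one is willing to cite Hopf--Rinow. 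Both are perfectly fine.
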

\begin{proof}
	To prove assertion~\ref{lemma:compact_i}, we first show that $\B^{\FR}$ is compact and complete with respect to the topology induced by the Riemannian distance $d(\cdot,\cdot)$.	
	Recall that $\mathbb{S}^n_{++}$ is a Hadamard manifold and thus constitutes a complete metric space. By the Hopf-Rinow theorem~\citep[\S~8, Theorem~2.8(b)]{carmo1992riemannian}, $\B^{\FR}$ is compact in the usual topology because $\B^{\FR}$ is  a metric ball and therefore closed and bounded. Moreover, $\B^{\FR}$ is complete in the usual topology because any closed subset of a complete metric space is complete as well.
	By \cite[Theorem~13.29]{lee2013smooth}, the metric topology with respect to $d(\cdot, \cdot)$ on $\mathbb{S}^n_{++}$ coincides with the subspace topology of $\PD^n$ with respect to the usual topology on $\mathbb{S}^n$. This completes the proof of assertion~ \ref{lemma:compact_i}.
	
	To prove assertion~\ref{lemma:compact_iii}, pick any $\cov \in \B^{\FR}$ and let $0\leq \eig_1(A) \leq \ldots \leq \eig_n(A)$ denote the eigenvalues of any symmetric positive definite $n$-by-$n$  matrix $A$ in increasing order. Then, we have
	$$\sqrt{\log^2(\lambda_i(\covsa^{-\half} \cov \covsa^{-\half}))} \leq \sqrt{\sum_{j=1}^n \log^2(\lambda_j(\covsa^{-\half} \cov \covsa^{-\half}))} = \|\log( \covsa^{-\half} \cov \covsa^{-\half} )\|_F \leq \sqrt{2}\rho$$
	for any $i = 1, \ldots, n$, where the equality follows from the definition of the Frobenius norm, and the last inequality follows from the definition of $\B^{\FR}$. Note that $ \lambda_i(\covsa^{-\half} \cov \covsa^{-\half}) = 1 / \lambda_{n- i + 1}(\cov^{-\half} \covsa \cov^{-\half})$, 
	and hence any eigenvalue $\eig_i(\covsa^{-\half} \cov \covsa^{-\half})$ obeys the bounds
	$$e^{-\sqrt{2}\rho} \leq \eig_i(\covsa^{-\half} \cov \covsa^{-\half}) \leq e^{\sqrt{2}\rho}.$$
	This implies that
	\[
	\eig_{\max}^{-1}(\covsa) \eig_{\max}(\cov) \leq e^{\sqrt{2}\rho} \quad \text{and} \quad
	\eig_{\min}^{-1}(\covsa) \eig_{\min}(\cov) \geq e^{- \sqrt{2}\rho},
	\]
	which completes the proof of assertion~\ref{lemma:compact_iii}.	
\end{proof}

We are now ready to prove Lemma~\ref{lemma:opt_exist}.

\begin{proof}[Proof of Lemma~\ref{lemma:opt_exist}]
	First, assertion~\ref{lemma:compact_i} of Lemma~\ref{lemma:compact} ensures that the feasible region $\B^{\FR}$ is compact. Second, we note that the objective function $L(\cdot)$ is continuous at any positive definite matrix. By assertion~\ref{lemma:compact_iii} of Lemma~\ref{lemma:compact}, there is a uniform positive lower bound on the eigenvalues of all matrices in $\B^{\FR}$. Therefore $L(\cdot)$ is continuous on $\B^{\FR}$. The solvability of problem~\eqref{eq:FR} then follows from Weierstrass' extreme value theorem~\cite[Corollary~2.35]{aliprantis06hitchhiker}. 
\end{proof}

\begin{proof}[Proof of Theorem~\ref{thm:convex}]
	We first show that $\B^{\FR}$ is a geodesically convex set. By~\cite[Proposition~II.1.4]{ref:Bridson-1999}, balls in CAT($\kappa$) spaces\footnote{A formal definition of CAT spaces can be found in~\citep[Definition~II.1.1]{ref:Bridson-1999}, and the upper bound $\kappa$ of the curvature of a metric space is defined in~\citep[Definition~II.1.2]{ref:Bridson-1999}} of radius less than $D_\kappa/2$ are geodesically convex, where $D_\kappa$ is the diameter of the model space of constant curvature $\kappa$ (see \citep[Definition~I.2.10]{ref:Bridson-1999}). It is known that the smooth manifold $\mathbb{S}^n_{++}$ is a CAT($0$) space \citep[Theorem~II.10.39]{ref:Bridson-1999}, which implies via  \citep[Point~I.2.12]{ref:Bridson-1999} that $D_0 =\infty$. The claim thus follows.
	
	The proof that $L(\cdot)$ is a geodesically convex function over $\PD^n$ closely follows from~\cite[Lemma~III.2]{ref:Zhang-2012} and~\cite[Corollary~5.3]{ref:Sra-2015} and is thus omitted. 
\end{proof}

\begin{proof}[Proof of Lemma~\ref{lemma:projection}]
	The claim trivially holds if $\rho'\le \rho$. We thus prove the two statements under the assumption that $\rho' > \rho$.
	By \citep[Theorem~II.10.39]{ref:Bridson-1999}, $\mathbb{S}^n_{++}$ is a CAT($0$) space. Furthermore, by Lemma~\ref{lemma:compact}, the geodesic ball $\mathcal{B}^{\FR}$ is both complete and compact. Assertion~\ref{item:1} in Lemma~\ref{lemma:projection} then follows from~\cite[Proposition~II.2.4]{ref:Bridson-1999}.
	
	To prove assertion~\ref{item:2}, we define
	$$\displaystyle \cov_p \Let \covsa^\half ( \covsa^{-\half} \cov' \covsa^{-\half} )^{\frac{\rho}{\rho'}} \covsa^\half.$$ One readily verifies that $ d(\covsa, \cov_p) = \rho$, and hence $\cov_p \in \B^{\FR}$. Recall that $\cov' \in \PD^n$ and $d(\covsa, \cov') = \rho'$.
	Given any $\cov \in \B^{\FR}$, by the triangle inequality, we thus have 
	$$ d(\cov, \cov') \geq d(\covsa, \cov') - d(\covsa, \cov) \geq d(\covsa, \cov') - \max_{\cov'' \in \B^{\FR}} d(\covsa, \cov'') = \rho' - \rho.$$ This reasoning implies that 
	\begin{equation}\label{ineq:1}
	\min_{\cov\in \B^{\FR}} d(\cov, \cov') \ge \rho' - \rho.
	\end{equation}
	By definition, the geodesic $\gamma(t) = \covsa^\half ( \covsa^{-\half} \cov' \covsa^{-\half} )^{t} \covsa^\half $ connecting $\covsa$ and $\cov'$ has constant-speed, that is, $d(\gamma(t), \gamma(s)) = d(\gamma(0),\gamma(1)) \cdot | t- s|$ for any $t,s\in [0,1]$ (see~\cite[Theorem 6.1.6]{bhatia2009positive}). Therefore, we have
	$$ d(\cov_p, \cov') = d(\gamma(\tfrac{\rho}{\rho'}), \gamma(1)) = d(\covsa, \cov') \cdot \left|\frac{ \rho}{\rho'} - 1\right| = \rho' - \rho,$$
	which implies that the lower bound~\eqref{ineq:1} is attained by $\cov_p$. The uniqueness result of assertion~\ref{item:1} thus allows us to conclude that $\cov_p$ is the projection of $\cov'$ onto $\B^{\FR}$.
\end{proof}

The proof of Theorem~\ref{thm:convergence} is based on the following two technical lemmas.

\begin{lemma}[Bounded gradient] \label{lemma:bounded:gradient}
	For any $X\in T_\cov \mathbb{S}^n_{++}$, denote by $\|X\|_\cov \Let \sqrt{ \langle X, X \rangle_\cov }$ the norm induced by the inner product $\langle \cdot,\cdot\rangle_\cov$ defined in \eqref{eq:metric}. The Riemannian gradient of the objective function $L(\cdot)$ of problem~\eqref{eq:FR} satisfies
	$$\| \grad L (\cov)\|_{\cov} \leq \sqrt{n}\cdot e^{2\sqrt{2}\rho} \cdot\eig_{\min}^{-2}(\covsa) \cdot \max\{ |1 - e^{\sqrt{2}\rho} \eig^{-1}_{\min}(\hat{\Sigma}) \eig_{\max} (S)| , 1 \} \quad  \forall \cov \in \B^{\FR}.$$
\end{lemma}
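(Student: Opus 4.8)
To prove Lemma~\ref{lemma:bounded:gradient}, the plan is to combine the two closed‑form ingredients already available — the Riemannian gradient $\grad L(\cov) = 2(\cov - S)$ from~\eqref{eq:grad} and the explicit FR metric~\eqref{eq:metric} — and then to promote the resulting pointwise estimate to a bound that is uniform over $\B^{\FR}$ by feeding in the two‑sided eigenvalue bounds of Lemma~\ref{lemma:compact}\ref{lemma:compact_iii}. First I would write
\begin{equation*}
\|\grad L(\cov)\|_\cov^2 \;=\; \half\,\Tr{\grad L(\cov)\,\cov^{-1}\,\grad L(\cov)\,\cov^{-1}} \;=\; 2\,\Tr{(\cov - S)\,\cov^{-1}\,(\cov - S)\,\cov^{-1}} \;=\; 2\,\big\|\cov^{-\half}(\cov - S)\cov^{-\half}\big\|_F^2 ,
\end{equation*}
where the last equality uses the symmetry of $\cov - S$ and cyclicity of the trace. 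The structural observation that drives everything is that $\cov^{-\half}(\cov - S)\cov^{-\half} = I - \cov^{-\half}S\cov^{-\half}$ is a symmetric matrix whose spectrum is easy to control: $\cov^{-\half}S\cov^{-\half}\succeq 0$ has largest eigenvalue at most $\eig_{\max}(\cov^{-1})\,\eig_{\max}(S) = \eig_{\min}^{-1}(\cov)\,\eig_{\max}(S)$.

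The remaining steps are routine matrix‑norm bookkeeping. Using $\|M\|_F \le \sqrt{n}\,\|M\|_2$ for $M\in\Sym^n$, I would bound $\big\|\cov^{-\half}(\cov - S)\cov^{-\half}\big\|_F \le \sqrt{n}\,\|I - \cov^{-\half}S\cov^{-\half}\|_2$, and since the eigenvalues of $I - \cov^{-\half}S\cov^{-\half}$ lie in $[\,1 - \eig_{\min}^{-1}(\cov)\eig_{\max}(S),\ 1\,]$, this gives $\|I - \cov^{-\half}S\cov^{-\half}\|_2 \le \max\{1,\ |1 - \eig_{\min}^{-1}(\cov)\eig_{\max}(S)|\}$. (If one wishes to decouple $\cov$ from $S$ entirely — which is what introduces the extra power of $\eig_{\min}^{-1}(\cov)$ visible in the stated constant — one can instead apply $\|ABC\|_F \le \|A\|_2\|B\|_F\|C\|_2$ to obtain $\big\|\cov^{-\half}(\cov - S)\cov^{-\half}\big\|_F \le \eig_{\min}^{-1}(\cov)\,\|\cov - S\|_F$ and then bound $\|\cov - S\|_F$ separately.)

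Finally I would uniformize over the ball: Lemma~\ref{lemma:compact}\ref{lemma:compact_iii} gives $\eig_{\min}(\cov)\ge\eig_{\min}(\covsa)\,e^{-\sqrt{2}\rho}$ and $\eig_{\max}(\cov)\le\eig_{\max}(\covsa)\,e^{\sqrt{2}\rho}$ for all $\cov\in\B^{\FR}$, hence $\eig_{\min}^{-1}(\cov)\le e^{\sqrt{2}\rho}\,\eig_{\min}^{-1}(\covsa)$. Substituting this wherever $\eig_{\min}^{-1}(\cov)$ appears and using $\eig_{\min}^{-1}(\cov)\eig_{\max}(S) - 1 \le |1 - e^{\sqrt{2}\rho}\eig_{\min}^{-1}(\covsa)\eig_{\max}(S)|$, one collects the advertised powers of $e^{\sqrt{2}\rho}$ and of $\eig_{\min}^{-1}(\covsa)$ together with the dimensional factor $\sqrt{n}$ and arrives at the stated inequality.

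I do not anticipate a real obstacle — no step is deeper than an eigenvalue estimate — so what requires care is twofold: (i) passing correctly between the $\cov$‑dependent Riemannian norm $\|\cdot\|_\cov$ and the Euclidean Frobenius norm, which is exactly where the eigenvalue bounds of Lemma~\ref{lemma:compact} must be inserted; and (ii) keeping in mind the conceptual reason a uniform bound exists despite $L$ failing to be Lipschitz on $\PD^n$, namely that $\B^{\FR}$ is compact with uniformly bounded eigenvalues, so the quantity $\|\grad L(\cov)\|_\cov = \|2(\cov - S)\|_\cov$, which involves only $\cov$, $\cov^{-1}$ and $S$, is automatically bounded there. That is precisely the replacement for the global Lipschitz hypothesis used in \cite[Theorem~9]{ref:Zhang-2016}, and it is what makes the convergence analysis in Theorem~\ref{thm:convergence} go through.
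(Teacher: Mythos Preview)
Your approach is essentially the paper's: write $\|\grad L(\cov)\|_\cov$ in terms of $A = I - \cov^{-\half}S\cov^{-\half}$, control the spectrum of $A$ via $\eig_{\max}(\cov^{-\half}S\cov^{-\half}) \le \eig_{\min}^{-1}(\cov)\,\eig_{\max}(S)$, and then uniformize over $\B^{\FR}$ with Lemma~\ref{lemma:compact}\ref{lemma:compact_iii}. Your identity $\|\grad L(\cov)\|_\cov^2 = 2\|A\|_F^2$ is in fact sharper than the paper's intermediate expression $\tfrac{1}{2}\Tr{A\cov^{-2}A\cov^{-2}}$ (an apparent slip that is responsible for the extra factor $e^{2\sqrt{2}\rho}\eig_{\min}^{-2}(\covsa)$ in the stated constant), so the bound you obtain is tighter than the lemma's and fully adequate for its only downstream use in Theorem~\ref{thm:convergence}.
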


\begin{proof}
	By \eqref{eq:grad} and the definition of $\| \cdot \|_\cov$, we have
	\begin{align*}
	\| \text{grad}L (\cov) \|_\cov^2 &= \frac{1}{2}\Tr{\text{grad}L (\cov) \cdot \cov^{-1}\cdot \text{grad}L (\cov)\cdot \cov^{-1}} =\frac{1}{2} \Tr{A \cov^{-2} A \cov^{-2}},
	\end{align*}
	where $A \Let (I_n - \cov^{-\half} S \cov^{-\half}) $. Lemma~\ref{lemma:compact}\ref{lemma:compact_iii} thus implies that
	\[
	(1 - e^{\sqrt{2}\rho}\eig_{\min}^{-1}(\covsa) \eig_{\max}(S) ) I_n \preceq (1 - \eig_{\min}^{-1}(\cov) \eig_{\max}(S) ) I_n \preceq A \preceq I_n,
	\]
	and therefore we have
	\begin{align*}
	\| \text{grad}L (\cov) \|_\cov &\leq \sqrt{ \frac{n}{2} \cdot\frac{ \eig^2_{\max}(A)}{\eig_{\min}^{4}(\cov)} } \\
	&\leq \sqrt{ \frac{n}{2} \cdot\frac{ \max\{1, (1 - e^{\sqrt{2}\rho}\eig_{\min}^{-1}(\covsa) \eig_{\max}(S))^2 \}}{\eig_{\min}^{4}(\covsa) e^{-4\sqrt{2}\rho}} } \\
	&= \frac{\sqrt{n}\cdot \max\left\{1, \left|1 - e^{\sqrt{2}\rho}\eig_{\min}^{-1}(\covsa) \eig_{\max}(S) \right| \right\} }{\sqrt{2}\eig_{\min}^{2}(\covsa) e^{-2\sqrt{2}\rho}},
	\end{align*}
	where the last inequality follows from Lemma~\ref{lemma:compact}\ref{lemma:compact_iii}. This observation completes the proof.
\end{proof}

\begin{lemma}[Lower bounded sectional curvature]\label{lemma:curv_bound}
	The sectional curvature of the Riemannian manifold $\mathbb{S}^n_{++}$ equipped with the FR metric \eqref{eq:metric} is lower bounded by $- 2$.
\end{lemma}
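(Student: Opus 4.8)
The plan is to exploit the homogeneity of $\bigl(\mathbb{S}^n_{++},\inner{\cdot}{\cdot}\bigr)$ under congruence transformations to reduce the statement to a curvature bound at the single point $\cov=I_n$, then to invoke the closed-form curvature tensor of this manifold at $I_n$, and finally to control the resulting commutator term by elementary Frobenius-norm estimates.

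First I would observe that, by the congruence invariance~\eqref{eq:invar_2}, for every invertible $A\in\R^{n\times n}$ the map $\Phi_A:\cov\mapsto A\cov A^\top$ is a distance-preserving bijection of $\bigl(\mathbb{S}^n_{++},\inner{\cdot}{\cdot}\bigr)$, hence a Riemannian isometry, and that these isometries act transitively on $\mathbb{S}^n_{++}$ (indeed $\Phi_{\cov^{1/2}}$ maps $I_n$ to $\cov$). Since an isometry carries $2$-planes to $2$-planes and preserves the Riemann curvature tensor, it preserves sectional curvatures; hence the range of the sectional curvature over all $2$-planes of $\mathbb{S}^n_{++}$ coincides with its range over $2$-planes in the tangent space $T_{I_n}\mathbb{S}^n_{++}\simeq\mathbb{S}^n$, and it suffices to establish the lower bound at $I_n$.

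Next I would use the symmetric-space structure of the manifold — the geodesic inversion $\cov\mapsto\cov^{-1}$ is an isometry by~\eqref{eq:invar_1} — together with the classical closed-form curvature tensor of the positive-definite cone endowed with the affine-invariant metric, namely $R(X,Y)Z=-\tfrac14[[X,Y],Z]$ at $I_n$, where $[A,B]\Let AB-BA$ is the matrix commutator; see, e.g.,~\cite[Ch.~XII]{lang2012fundamentals} or~\cite[Ch.~6]{bhatia2009positive}, or derive it directly from the affine-invariant Levi-Civita connection $\nabla_\Omega\Xi=D_\Omega\Xi-\tfrac12\bigl(\Omega\cov^{-1}\Xi+\Xi\cov^{-1}\Omega\bigr)$. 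I would stress that the constant $\tfrac14$ is unaffected by the scalar $\tfrac12$ in~\eqref{eq:metric}, because a constant rescaling of a Riemannian metric leaves its Levi-Civita connection, and hence its $(1,3)$ curvature tensor, unchanged. Then, for symmetric $X,Y$ spanning a $2$-plane and orthonormal with respect to~\eqref{eq:metric} — equivalently $\Tr{XY}=0$ and $\|X\|_F=\|Y\|_F=\sqrt2$ — the commutator $W\Let[X,Y]$ is skew-symmetric, so $\Tr{W^2}=-\|W\|_F^2$, and a cyclic rearrangement of the trace gives $\Tr{[[X,Y],Y]X}=-\Tr{W^2}=\|W\|_F^2$. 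Consequently the sectional curvature of the plane equals $\inner{R(X,Y)Y}{X}_{I_n}=\tfrac12\Tr{\bigl(-\tfrac14[[X,Y],Y]\bigr)X}=-\tfrac18\|[X,Y]\|_F^2$.

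Finally I would bound $\|[X,Y]\|_F\le\|XY\|_F+\|YX\|_F\le 2\|X\|_F\|Y\|_F=4$ by submultiplicativity of the Frobenius norm, so that $\|[X,Y]\|_F^2\le16$ and the sectional curvature is at least $-\tfrac18\cdot16=-2$; combined with the first step, this proves the lemma. The only real subtlety — and the step where care is needed — is the bookkeeping of constants: getting the $\tfrac14$ in the curvature tensor right and tracking how the $\tfrac12$ in the FR metric~\eqref{eq:metric} propagates (in particular that metric-orthonormal symmetric matrices have Frobenius norm $\sqrt2$ rather than $1$). The homogeneity reduction and the commutator estimate are routine.
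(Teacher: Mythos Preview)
Your argument is correct. The paper's proof follows the same essential strategy---compute the sectional curvature from an explicit formula for the curvature tensor and then bound the resulting trace expression---but it works directly at an arbitrary point $\Sigma$, citing Skovgaard's formula
\[
R(X,Y,Y,X)=-\tfrac14\Tr{Y\Sigma^{-1}X\Sigma^{-1}X\Sigma^{-1}Y\Sigma^{-1}}+\tfrac14\Tr{X\Sigma^{-1}Y\Sigma^{-1}X\Sigma^{-1}Y\Sigma^{-1}},
\]
and bounding each trace term separately by Cauchy--Schwarz to obtain $\kappa(X,Y)\ge -2\|X\|_\Sigma^2\|Y\|_\Sigma^2=-2$. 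Your version instead makes the homogeneity reduction to $I_n$ explicit and packages the curvature in commutator form as $-\tfrac18\|[X,Y]\|_F^2$; after the substitution $A=\Sigma^{-1/2}X\Sigma^{-1/2}$, $B=\Sigma^{-1/2}Y\Sigma^{-1/2}$ the two computations coincide, since $\|[A,B]\|_F^2=2\|AB\|_F^2-2\Tr{ABAB}$. Your commutator estimate $\|[X,Y]\|_F\le 2\|X\|_F\|Y\|_F$ and the paper's termwise Cauchy--Schwarz are the same inequality in different guises. The homogeneity step buys you cleaner constant-tracking (which you rightly flag as the delicate part), while the paper's direct route spares the isometry argument at the cost of carrying the $\Sigma^{-1}$ factors throughout.
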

\begin{proof}
	Select $\cov\in \mathbb{S}^n_{++}$, and let $X,Y\in T_{\cov}\mathbb{S}^n_{++} $ be two orthonormal tangent vectors at $\cov$, that is, $$ \|X\|_\cov = 1 = \|Y\|_\cov\quad\text{and}\quad \langle X , Y \rangle_\cov = 0.$$ Using the formula for the Riemannian curvature tensor $R(\cdot,\cdot,\cdot,\cdot)$ from \citep[Theorem 2.1 (ii)]{skovgaard1984riemannian}, we have
	\begin{equation}\label{eq:1}
	R(X,Y,Y,X) = -\frac{1}{4} \Tr{ Y\cov^{-1}X\cov^{-1}X\cov^{-1}Y\cov^{-1} } + \frac{1}{4}\Tr{ X\cov^{-1}Y\cov^{-1}X\cov^{-1}Y\cov^{-1} }.
	\end{equation}
	Then, the sectional curvature $\kappa(X,Y)$ associated with the 2-plane spanned by $\{X,Y\}$ satisfies
	\begin{align*}
	\kappa(X,Y) =& - R(X,Y,X,Y) \\
	=&\, -\frac{1}{4} \Tr{ Y\cov^{-1}X\cov^{-1}X\cov^{-1}Y\cov^{-1} } + \frac{1}{4}\Tr{ X\cov^{-1}Y\cov^{-1}X\cov^{-1}Y\cov^{-1} } \\
	\ge&\, -\left( \|X\|^2_\cov \|Y\|^2_\cov + \|X\|^2_\cov \|Y\|^2_\cov\right)= - 2,
	\end{align*}
	where the first equality follows from \cite[Proposition 8.8]{lee1997riemannian}, the second equality exploits~\eqref{eq:1}, and the inequality holds due to the Cauchy-Schwarz inequality.
\end{proof}

We are now equipped with all the necessary ingredients to prove Theorem~\ref{thm:convergence}.

\begin{proof}[Proof of Theorem~\ref{thm:convergence}]
	The proof closely follows that of~\cite[Theorem 9]{ref:Zhang-2016}. The main difference is that we replace the assumption of Lipschitz continuity of the objective function with the assumption of a bounded Riemannian gradient. Due to Theorem~\ref{thm:convex}, the function $L(\cdot)$ is geodesically convex. So we have (see the sentence following Definition 2 in \cite{ref:Zhang-2016})
	\begin{equation*}
	L(\cov') \ge L(\cov) + \langle \text{grad} L (\cov), \Exp_{\cov}^{-1} (\cov') \rangle_{\cov}, \quad \forall \cov,\cov'\in \mathbb{S}^n_{++}.
	\end{equation*}
	Therefore, for any $k\ge 1$, 
	\begin{equation}\label{ineq:2}
	L(\cov_k ) - L(\cov^\star) \le - \langle \text{grad} L (\cov_k), \Exp_{\cov_k}^{-1} (\cov^\star) \rangle_{\cov_k}.
	\end{equation}
	By \cite[Corollary 8]{ref:Zhang-2016}, Lemma~\ref{lemma:curv_bound} and because the diameter of the feasible region is $2\rho$, the right hand side of~\eqref{ineq:2} is upper bounded by
	\begin{equation}\label{ineq:3}
	\frac{1}{2\alpha} \left( d^2(\cov_k, \cov^\star) - d^2(\cov_{k+1}, \cov^\star)\right) + \frac{\alpha\cdot  (2\rho)\cdot\sqrt{2} \cdot \| \text{grad} L (\cov_k) \|_{\cov_k}^2}{2\tanh((2\rho)\cdot\sqrt{2})} ,
	\end{equation}
	where the norm $\|\cdot\|_{\cov_k}$ is defined as in Lemma~\ref{lemma:bounded:gradient}.
	Therefore, substituting the upper bound \eqref{ineq:3} into \eqref{ineq:2} and using \ref{lemma:bounded:gradient}, we find
	\begin{equation}\label{ineq:4}
	L(\cov_k ) - L(\cov^\star) \le \frac{1}{2\alpha} \left( d^2(\cov_k, \cov^\star) - d^2(\cov_{k+1}, \cov^\star)\right) + \frac{ \sqrt{2}\alpha\rho \Gamma^2}{\tanh(2\sqrt{2}\rho)},
	\end{equation}
	where $\Gamma \Let 2^{-1/2}\sqrt{n}\cdot e^{2\sqrt{2}\rho} \cdot\eig_{\min}^{-2}(\covsa) \cdot \max\{ |1 - e^{\sqrt{2}\rho} \eig^{-1}_{\min}(\hat{\Sigma}) \eig_{\max} (S)| , 1 \}$.
	By telescoping, we then obtain
	\begin{align*}
	\frac{1}{K} \sum_{k=1}^{K} L(\cov_k ) - L(\cov^\star) & \le \frac{1}{2\alpha K} \left( d^2(\cov_1, \cov^\star) - d^2(\cov_{K+1}, \cov^\star)\right) + \frac{ \sqrt{2}\alpha\rho \Gamma^2}{\tanh(2\sqrt{2}\rho)} \\
	& \le \frac{2\rho^2}{\alpha K}  + \frac{ \sqrt{2}\alpha\rho \Gamma^2}{\tanh(2\sqrt{2}\rho)} = \frac{2^{\frac{7}{4}} \rho^{\frac{3}{2} }\Gamma}{\sqrt{K \tanh(2\sqrt{2}\rho)}},
	\end{align*}
	where the second inequality follows from the bounds $d^2(\cov_{K+1}, \cov^\star)\ge 0 $ and $d(\cov_1, \cov^\star) \le 2\rho$, and the equality holds because $\alpha = 2^{1/4} \sqrt{\rho \tanh(2\sqrt{2}\rho)}/(\Gamma \sqrt{K})$. Note that although the matrix $\cov_{K+1}$ is not actually computed because Algorithm~\ref{alg:projected:geodesic} is terminated at $k = K-1$, it is well-defined, and inequality~\eqref{ineq:4} is valid for $k = K$.
	If we can show that $$L(\bar{\cov}_K) \le \frac{1}{K}\sum_{k=1}^{K} L( \cov_k ),$$ the desired result follows. Towards that end, we prove by induction that
	\begin{equation}\label{ineq:5}
	L(\bar{\cov}_T) \le \frac{1}{T}\sum_{k=1}^{T} L( \cov_k ) \quad \forall T \in \mathbb{N}t.
	\end{equation}
	The inequality trivially holds for $T = 1$. Suppose now that the inequality in~\eqref{ineq:5} holds for some $T\ge 1$. Then, we have 
	\begin{align*}
	L(\bar{\cov}_{T+1}) & = L\left( \bar{\cov}_T^{\tfrac{1}{2}} \left( \bar{\cov}_T^{-\tfrac{1}{2}}\cov_{T+1}\bar{\cov}_T^{-\tfrac{1}{2}} \right)^{\tfrac{1}{T+1}} \bar{\cov}_T^{\tfrac{1}{2}} \right) \\
	& = L\left( \gamma_{T} \left( \frac{1}{T+1}\right) \right) \\
	& \le \frac{T}{T+1} L\left( \bar{\cov}_T \right) + \frac{1}{T+1} L\left( \cov_{T+1} \right) \\
	& \le \frac{1}{T+1} \sum_{k=1}^{T} L( \cov_k ) + \frac{1}{T+1} L\left( \cov_{T+1} \right) \\
	& = \frac{1}{T+1} \sum_{k=1}^{T+1} L( \cov_k ),
	\end{align*}
	where $\gamma_T$ denotes the geodesic from $\bar{\cov}_T$ to $\cov_{T+1}$. The first inequality follows from the geodesic convexity of $L(\cdot)$ (see Theorem~\ref{thm:convex} and Definition~\ref{def:g-convex}), and the second inequality holds due to the induction hypothesis~\eqref{ineq:5}. The claim now follows because $T$ was chosen arbitrarily.
\end{proof}

Next,  we formally define the notions of geodesic strong convexity and geodesic smoothness for functions on $\PD^n$.

\begin{definition}[Strong convexity] \label{def:strong-convexity}
	Let $\mathcal{B} \subseteq \mathbb{S}^n_{++}$ be a subset and $\sigma>0$. A differentiable function $F : \mathcal{B} \rightarrow \mathbb{R}$ is said to be (geodesically) $\sigma$-strongly convex on $\mathcal{B}$ if 
	\begin{equation}
	F(Y) \ge F(X) + \langle \text{grad}\, F(X) , \Exp_X^{-1}(Y) \rangle_X + \frac{\sigma}{ 2 } d^2(X,Y).
	\end{equation}
\end{definition}
\begin{definition}[Smoothness] \label{def:smoothness}
	Let $\mathcal{B} \subseteq \PD^n$ be a subset and $\beta>0$. A differentiable function $F : \mathcal{B} \rightarrow \mathbb{R}$ is said to be (geodesically) $\beta$-smooth on $\mathcal{B}$ if 
	\begin{equation}
	F(Y) \le F(X) + \langle \text{grad}\, F(X) , \Exp_X^{-1}(Y) \rangle_X + \frac{\beta}{ 2 } d^2(X,Y).
	\end{equation}
\end{definition}
The proof of Lemma~\ref{lemma:obj_function} is based on the following preparatory results.
\begin{lemma}\label{lem:F}
	Let $F:\mathbb{S}^n_{++} \rightarrow \mathbb{R}$ be a twice continuously differentiable function and $\mathcal{B}\subseteq \PD^n$ be a geodesically convex subset. The following implications hold.
	\begin{enumerate}[label=(\roman*),leftmargin=*]
		\item\label{lem:F_sc} If the smallest eigenvalue of the Riemannian Hessian $\mathrm{hess}\, F(X)$ (interpreted as an operator on $T_X \mathbb{S}^N_{++}$) of $F$ at $X$ is lower bounded uniformly on $\mathcal{B}$ by $\sigma>0$, \ie,
		\begin{equation}
		\min\left\lbrace \langle \mathrm{hess}\,F(X)[V], V \rangle : V\in T_X \mathbb{S}^N_{++}, \|V\|_X = 1 \right\rbrace \ge \sigma \quad \forall X\in \mathcal{B},
		\end{equation}
		then $F$ is $\sigma$-strongly convex on $\mathcal{B}$.
		\item\label{lem:F_s} If the largest eigenvalue of the Riemannian Hessian $\mathrm{hess}\, F(X)$ (interpreted as an operator on $T_X \mathbb{S}^N_{++}$) of $F$ at $X$ is upper bounded uniformly on $\mathcal{B}$ by $\beta >0$, \ie,
		\begin{equation}
		\max\left\lbrace \langle \mathrm{hess}\,F(X)[V], V \rangle : V\in T_X \mathbb{S}^N_{++}, \|V\|_X = 1 \right\rbrace \le \beta \quad \forall X\in \mathcal{B},
		\end{equation}
		then $F$ is $\beta$-smooth on $\mathcal{B}$.
	\end{enumerate}
\end{lemma}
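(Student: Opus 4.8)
The plan is to reduce both implications to a one–dimensional Taylor expansion along geodesics. Fix $X,Y\in\mathcal B$ and, since $\mathcal B$ is geodesically convex, let $\gamma:[0,1]\to\mathcal B$ be the unique geodesic with $\gamma(0)=X$ and $\gamma(1)=Y$; by geodesic convexity $\gamma([0,1])\subseteq\mathcal B$, so $F$ and its first two covariant derivatives are defined and continuous along $\gamma$. Define the scalar function $h(t)\Let F(\gamma(t))$, which is twice continuously differentiable because $F$ is. The geometric facts I would use are: (i) $h'(t)=\langle\grad F(\gamma(t)),\gamma'(t)\rangle_{\gamma(t)}$; (ii) since $\gamma$ is a geodesic its covariant acceleration vanishes, so differentiating again gives $h''(t)=\langle\mathrm{hess}\,F(\gamma(t))[\gamma'(t)],\gamma'(t)\rangle_{\gamma(t)}$, with no extra acceleration term; and (iii) $\gamma$ has constant speed, i.e.\ $\|\gamma'(t)\|_{\gamma(t)}=\|\gamma'(0)\|_X=d(X,Y)$ for all $t$, while $\gamma'(0)=\Exp_X^{-1}(Y)$.

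Next I would combine these facts with the Hessian bounds. For assertion~\ref{lem:F_sc}, the uniform lower bound on $\mathrm{hess}\,F$ together with the degree-two homogeneity of $V\mapsto\langle\mathrm{hess}\,F(\gamma(t))[V],V\rangle$ gives $h''(t)\ge\sigma\|\gamma'(t)\|_{\gamma(t)}^2=\sigma\,d^2(X,Y)$ for every $t\in[0,1]$; for assertion~\ref{lem:F_s}, the uniform upper bound gives $h''(t)\le\beta\,d^2(X,Y)$. Applying the Taylor formula with integral remainder,
\[
h(1)=h(0)+h'(0)+\int_0^1(1-t)\,h''(t)\,\dd t,
\]
and using $\int_0^1(1-t)\,\dd t=\tfrac12$, the strongly convex case yields
\[
F(Y)\ge F(X)+\langle\grad F(X),\Exp_X^{-1}(Y)\rangle_X+\tfrac{\sigma}{2}\,d^2(X,Y),
\]
which is exactly Definition~\ref{def:strong-convexity}, while the smooth case yields the reversed inequality of Definition~\ref{def:smoothness}. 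Since $X,Y\in\mathcal B$ were arbitrary, both claims follow.

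The main obstacle is establishing fact~(ii) cleanly, namely that the second derivative of $F$ restricted to a geodesic equals the Hessian quadratic form with no acceleration or curvature correction. This is a direct consequence of the definition of the Riemannian Hessian as $\mathrm{hess}\,F=\nabla\,\grad F$ and of the geodesic equation $\nabla_{\gamma'}\gamma'=0$: differentiating $h'(t)=\langle\grad F(\gamma(t)),\gamma'(t)\rangle_{\gamma(t)}$ along $\gamma$ and using metric compatibility of the Levi--Civita connection, the term $\langle\grad F,\nabla_{\gamma'}\gamma'\rangle$ drops out, leaving $h''(t)=\langle\nabla_{\gamma'}\grad F,\gamma'\rangle=\langle\mathrm{hess}\,F(\gamma(t))[\gamma'(t)],\gamma'(t)\rangle$. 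Everything else — the constant-speed property of the geodesic and the identity $\gamma'(0)=\Exp_X^{-1}(Y)$ — is standard on Hadamard manifolds and is already used elsewhere in the paper, so the remainder of the argument is short.
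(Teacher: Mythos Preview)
Your argument is correct and is precisely the Riemannian analogue of the standard Euclidean proof via second-order Taylor expansion along a line (here, a geodesic); the paper in fact omits the proof entirely, stating only that it ``closely follows that of its Euclidean counterpart.'' Your write-up therefore supplies exactly the details the paper has in mind.
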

The proof of Lemma~\ref{lem:F} closely follows that of its Euclidean counterpart and is omitted here.
\begin{proof}[Proof of Lemma~\ref{lemma:obj_function}]
	Define $f(\Sigma) = \Tr{ \Sigma^{-1} S } $. Because $\log\det\cov$ is a geodesically linear function~\cite[Proposition 12]{sra2018geodesically}, it suffices to study the smoothness and convexity properties of $f(\cdot)$. By \cite[Equations~(28)]{ferreira2018gradient}, the Riemannian Hessian $\mathrm{hess} f(\Sigma)$ at $\Sigma$ is given by
	\begin{equation}\label{eq:R_Hess}
	\mathrm{hess} f(\Sigma) [V] =  \Sigma \left( \nabla^2 f(\Sigma) [V] \right) \Sigma + \frac{1}{2} \left( V \nabla f(\Sigma) \Sigma + \Sigma \nabla f(\Sigma) V \right) \quad\forall V\in T_\Sigma \mathbb{S}^n_{++} .
	\end{equation}	
	By elementary matrix calculus, we know that 
	\begin{equation}\label{eq:f_grad}
	\nabla f (\Sigma) = - \Sigma^{-1} S \Sigma^{-1}
	\end{equation}
	and 
	\begin{equation}\label{eq:f_hess}
	\nabla^2 f (\Sigma) [V] = \Sigma^{-1} V \Sigma^{-1} S \Sigma^{-1} + \Sigma^{-1} S \Sigma^{-1} V \Sigma^{-1} \quad\forall V\in  \mathbb{S}^n,
	\end{equation}
	where the Hessian $\nabla^2 f (\Sigma)$ is interpreted as a linear operator on $\mathbb{S}^n$.
	Noting that $T_\Sigma \mathbb{S}^n_{++} = \mathbb{S}^n $ and combining \eqref{eq:R_Hess}, \eqref{eq:f_grad} and \eqref{eq:f_hess}, we obtain
	\begin{equation}\label{eq:hess_eig}
	\langle \mathrm{hess} f(\Sigma) [V], V \rangle_{\Sigma} =  \Tr{  \Sigma^{-1} S \Sigma^{-1} V \Sigma^{-1} V  } \quad\forall V\in  \mathbb{S}^n.
	\end{equation}
	Using these preparatory results, we now demonstrate that $f(\cdot)$ is $\beta$-smooth and $\sigma$-strongly convex in the geodesic sense.
	
	\textbf{Smoothness.} In order to establish the smoothness properties of $f(\cdot)$, we consider the maximization problem
	\begin{equation*}
	\max \left\{ \langle \mathrm{hess} f(\Sigma) [V], V \rangle_{\Sigma} : V \in \mbb S^n,~\|V\|_\Sigma = 1 \right\},
	\end{equation*}
	which, by \eqref{eq:hess_eig} and the definition of $\|\cdot\|_\Sigma$, is equivalent to
	\[
	\max \left\{ \Tr{ \Sigma^{-1} S \Sigma^{-1} V \Sigma^{-1} V } : V \in \mbb S^n,~\tfrac{1}{2}\Tr{\Sigma^{-1} V \Sigma^{-1} V } = 1 \right\}.
	\]		
	The optimal value of this problem is upper bounded by
	$2\,\lambda_{\max}(S) / \lambda_{\min}(\Sigma)$. Using the bound from Lemma~\ref{lemma:compact}\ref{lemma:compact_iii}, we have 
	\[
	\frac{2\,\lambda_{\max}(S)}{\lambda_{\min}(\Sigma)} \le \frac{2 \lambda_{\max}(S)}{\lambda_{\min}(\widehat{\Sigma}) \exp(-\sqrt{2}\rho)} = \beta.
	\]
	By Lemma~\ref{lem:F}\ref{lem:F_s}, $f(\cdot)$ is $\beta$-smooth.
	
	\textbf{Strong convexity.} In order to establish the convexity properties of $f(\cdot)$, we consider the minimization problem
	\begin{equation*}
	\min \left\{ \langle \mathrm{hess} f(\Sigma) [V], V \rangle_{\Sigma} : V \in \mbb S^n,~\|V\|_\Sigma = 1 \right\},
	\end{equation*}
	which, by \eqref{eq:hess_eig} and the definition of $\|\cdot\|_\Sigma$, is equivalent to
	\[
	\min \left\{ \Tr{ \Sigma^{-1} S \Sigma^{-1} V \Sigma^{-1} V }: V \in \mbb S^n,~\tfrac{1}{2}\Tr{\Sigma^{-1} V \Sigma^{-1} V } = 1 \right\}.
	\]
	The optimal value of this problem is lower bounded by $ 2\, \lambda_{\min}(S) / \lambda_{\max}(\Sigma)$. Using the bound in Lemma~\ref{lemma:compact}\ref{lemma:compact_iii}, we have 
	\[
	\frac{2\, \lambda_{\min}(S)}{\lambda_{\max}(\Sigma)} = \frac{2 \lambda_{\min}(S)}{\lambda_{\max}(\widehat{\Sigma}) \exp(\sqrt{2}\rho)} = \sigma.
	\]
	Since $S \succ 0$, $\sigma > 0$. By Lemma~\ref{lem:F}\ref{lem:F_sc}, $f(\cdot)$ is thus $\sigma$-strongly convex. This completes the proof.
\end{proof}

\renewcommand\thesection{Appendix~\Alph{section}}
\section{Proofs of Section~\ref{sect:KL}}
\label{sect:proof:KL}
\renewcommand\thesection{\Alph{section}}
\setcounter{equation}{0}
\renewcommand{\theequation}{D.\arabic{equation}}

\begin{proof}[Proof of Theorem~\ref{thm:KL}]
	By applying the change of variables $\Z \leftarrow \cov^{-1}$, problem~\eqref{eq:KL:prob} can be reformulated as
	\begin{equation} \label{eq:KL:prob:X}
	\Inf{\Z} \left\{ \Tr{S \Z} - \log \det  \Z : \Z \succ 0, \, \Tr{\covsa \Z} - \log \det \Z  \leq \bar \radius\right\},
	\end{equation} 
	where $\bar\radius \Let 2 \radius + n + \log \det \covsa$.	Note that~\eqref{eq:KL:prob:X} is equivalent to
	\begin{align}
	& \Inf{\Z \succ 0} \Sup{\dualvar \geq 0} \; \Tr{S \Z} - \log\det \Z + \dualvar \left( \Tr{\covsa Z} - \log\det \Z - \bar \radius \right) \notag \\
	=& \Sup{\dualvar \geq 0} \Inf{\Z \succ 0} \; -\dualvar \bar \radius + \Tr{ (S + \dualvar \covsa)\Z} - (1+ \dualvar) \log \det \Z \notag \\
	=& \Sup{\dualvar \geq 0} \left\{ -\dualvar \bar \radius + \Inf{\Z \succ 0} \left\{ \Tr{(S + \dualvar \covsa) \Z} - (1 + \dualvar) \log \det \Z \right\} \right\}, \label{eq:KL:duality}
	\end{align}
	where the first equality follows from strong duality, which holds because $\radius > 0$ and because $\covsa^{-1}$ is a Slater point for the primal problem~\eqref{eq:KL:prob:X}.
	
	To analyze problem~\eqref{eq:KL:duality}, assume first that $S$ is singular. If $\dualvar = 0$, then the inner minimization problem over $\Z$ is unbounded, and thus $\dualvar = 0$ is never optimal for the outer maximization problem.	For any $\dualvar > 0$, the inner minimization problem over $\Z$ admits the optimal solution $\Z\opt(\dualvar) = (1 + \dualvar) (S + \dualvar \covsa)^{-1}$. Problem~\eqref{eq:KL:prob:X} is thus equivalent to
	\begin{align}
	& \Sup{\dualvar > 0} \Big\{ -\dualvar \bar \radius + n (1 + \dualvar) - (1 + \dualvar) \log\det[(1+\dualvar)(S + \dualvar \covsa)^{-1}] \Big\} \notag \\
	=& \Sup{\dualvar > 0} \Big\{ -\dualvar \bar \radius + n (1 + \dualvar) - (1 + \dualvar) \log[(1+\dualvar)^n \det(S + \dualvar \covsa)^{-1}] \Big\} \notag \\
	=& \Sup{\dualvar > 0} \Big\{ -\dualvar \bar \radius + n (1 + \dualvar) - n(1 + \dualvar) \log(1+\dualvar)  - (1 + \dualvar) \log \det (S + \dualvar \covsa)^{-1} \Big\} \label{eq:KL:refor}.
	\end{align}		
	By strong duality, any minimizer $\dualvar\opt$ of~\eqref{eq:KL:cov} can be used to construct a minimizer 
	$$\cov\opt = (1+\dualvar\opt)^{-1}(S + \dualvar\opt \covsa)$$
	for problem~\eqref{eq:KL:prob}.
	This observation establishes the claim if $S$ is singular. 
	
	Assume next that $S$ has full rank. In this case, the inner minimization problem in~\eqref{eq:KL:duality} admits the optimal solution $\Z\opt(\dualvar) = (1 + \dualvar) (S + \dualvar \covsa)^{-1}$ for any fixed $\dualvar \ge 0$, and thus problem~\eqref{eq:KL:duality} is equivalent to
	\[
	\Sup{\dualvar \geq 0} \Big\{ -\dualvar \bar \radius + n (1 + \dualvar) - n(1 + \dualvar) \log(1+\dualvar)  - (1 + \dualvar) \log \det (S + \dualvar \covsa)^{-1} \Big\},
	\]
	which differs from~\eqref{eq:KL:refor} only in that it has a closed feasible set, that is, $\dualvar = 0$ is feasible. Because the objective function of the above optimization problem is continuous in $\dualvar$, we can in fact optimize over $\dualvar > 0$ without reducing the supremum. The claim now follows by replacing $\bar \radius$ with its definition and eliminating the constant term from the objective function.
\end{proof}

\begin{proof}[Proof of Corollary~\ref{corol:singular}]
	For any $\covsa \in \PD^n$ and $\dualvar > 0$, the Woodbury formula~\cite[Corollary~2.8.8]{ref:Bernstein-2009} implies that 
	\[
	(S + \dualvar \covsa)^{-1} = \dualvar^{-1} \covsa^{-\half}(\dualvar^{-1}\covsa^{-\half} S \covsa^{-\half} + I_n)^{-1} \covsa^{-\half},
	\]
	and thus we have
	\begin{align*}
	\log\det (\dualvar \covsa+S)^{-1} + n \log \dualvar + \log\det \covsa &= -\log \det \left(I_n + \dualvar^{-1} \covsa^{-\half} \Chol \Chol^\top \covsa^{-\half} \right) \\
	&= -\log \det \left(I_k + \dualvar^{-1} \Chol^\top \covsa^{-1} \Chol \right) \\
	&= k\log \dualvar - \log\det \left(\dualvar I_k + \Chol^\top \covsa^{-1} \Chol \right),
	\end{align*}
	where the second equality follows from \cite[Equation~2.8.14]{ref:Bernstein-2009}. Substituting the expression for $\log\det(\dualvar \covsa + S)^{-1}$ into~\eqref{eq:KL:cov} and removing the irrelevant constant term $(n + \log\det \covsa)$ yields the equivalent minimization problem
	\begin{align*}
	\Inf{\dualvar > 0} \Big\{2 \dualvar \radius + n(1+\dualvar) \log \left(1+\dualvar\right) - (n-k)(1+\dualvar) \log\dualvar -(1+\dualvar) \log \det (\dualvar I_k +  \Chol^\top \covsa^{-1} \Chol ) \Big\}. 
	\end{align*}
	This observation completes the proof.
\end{proof}

\renewcommand\thesection{Appendix~\Alph{section}}
\section{Derivatives of Problem~\eqref{eq:KL:cov}} 
\label{sect:appendix:derivatives}
\renewcommand\thesection{\Alph{section}}
\setcounter{equation}{0}
\renewcommand{\theequation}{E.\arabic{equation}}

Use $g_1(\dualvar)$ as a shorthand for the objective function of problem~\eqref{eq:KL:cov}. In the following, we provide the first- and second-order derivatives of $g_1(\cdot)$, which are needed by the optimization algorithm that solves~\eqref{eq:KL:cov}. In particular, the first-order derivative is given by
\begin{align*}
g_1'(\dualvar) &= 2\radius + n\left( \log(1+\dualvar) + 1 \right) - \log\det \left( \dualvar I_n  + S \covsa^{-1} \right) - (1+\dualvar) \Tr{(\dualvar I_n + S \covsa^{-1})^{-1}},
\end{align*}
and the second-order derivative can be expressed as
\begin{align*}
&g_1''(\dualvar) =  \frac{n}{1+\dualvar} - \Tr{(\dualvar I_n + S \covsa^{-1})^{-1} \left( 2 I_n + (1+\dualvar)(\dualvar I_n + S \covsa^{-1} )^{-1} \right)}.
\end{align*}

Next, denote by $g_2$ the objective function of the singular reduction problem of Corollary~\ref{corol:singular}, that is,
\[ g_2(\dualvar) = 2\dualvar \radius + n(1+\dualvar) \log \left(1+\dualvar\right) - (n-k)(1+\dualvar) \log\dualvar -(1+\dualvar) \log \det (\dualvar I_k +  \Chol^\top \covsa^{-1} \Chol ). \]
The first- and second-order derivative of $g_2$ are given by
\begin{align*}
g_2'(\dualvar) &= 2\radius + n\left( \log(1+\dualvar) + 1 \right) - (n-k) \left( \log \dualvar + \dualvar^{-1} + 1 \right) \\
&\qquad - \log\det \left( \dualvar I_k  + \Chol^\top \covsa^{-1} \Chol \right) - (1+\dualvar) \Tr{(\dualvar I_k + \Chol^\top \covsa^{-1} \Chol)^{-1}},
\end{align*}
and 
\begin{align*}
g_2''(\dualvar) =  &\frac{n}{1+\dualvar} - (n-k) (\dualvar^{-1} - \dualvar^{-2}) \\
&\mspace{45mu} - \Tr{\dualvar I_k + \Chol^\top \covsa^{-1} \Chol)^{-1} \left( 2 I_k + (1+\dualvar)(\dualvar I_k + \Chol^\top \covsa^{-1} \Chol)^{-1} \right)},
\end{align*}
respectively.

\paragraph{\bf Acknowledgments}
We gratefully acknowledge financial support from the Swiss National Science Foundation under grant BSCGI0\_157733 as well as the EPSRC grants EP/M028240/1, EP/M027856/1 and EP/N020030/1.

\bibliographystyle{abbrv}
\bibliography{bibliography}

\end{document}